\newtheorem{theorem}{Theorem}[section]
\newtheorem{lemma}[theorem]{Lemma}
\newtheorem{proposition}[theorem]{Proposition}
\newtheorem{corollary}[theorem]{Corollary}
\theoremstyle{definition}
\newtheorem{definition}[theorem]{Definition}
\newtheorem{remark}[theorem]{Remark}
\newtheorem{example}[theorem]{Example}
\newtheorem*{question*}{Question}
\newtheorem*{questions*}{Questions}
\newtheorem*{steps*}{Answer/steps}
\newtheorem*{progress*}{Progress}
\newtheorem*{classification*}{Classification}
\newtheorem*{construction*}{Classification}
\newtheorem*{example*}{Example}
\newtheorem*{remark*}{Remark}
\newtheorem*{remarks*}{Remarks}
\newtheorem*{definition*}{Definition}
\newcommand{\C}{\mathbb{C}}
\newcommand{\Q}{\mathbb{Q}}
\newcommand{\R}{\mathbb{R}}
\newcommand{\Z}{\mathbb{Z}}
\newcommand{\X}{\mathcal{X}}
\DeclareMathOperator{\Res}{Res}
\DeclareMathOperator{\GL}{GL}
\DeclareMathOperator{\Aut}{Aut}
\DeclareSymbolFont{cyrletters}{OT2}{wncyr}{m}{n}
\DeclareMathSymbol{\Sha}{\mathalpha}{cyrletters}{"58}
\def\greekbolds#1{%
 \@for\next:=#1\do{%
    \def\X##1;{%
     \expandafter\def\csname V##1\endcsname{\boldsymbol{\csname##1\endcsname}}
     }
   \expandafter\X\next;
  }
}
\def\make@bb#1{\expandafter\def
  \csname bb#1\endcsname{{\mathbb{#1}}}\ignorespaces}
\def\make@bbm#1{\expandafter\def
  \csname bb#1\endcsname{{\mathbbm{#1}}}\ignorespaces}
\def\make@bf#1{\expandafter\def\csname bf#1\endcsname{{\bf
      #1}}\ignorespaces} 
\def\make@gr#1{\expandafter\def
  \csname gr#1\endcsname{{\mathfrak{#1}}}\ignorespaces}
\def\make@scr#1{\expandafter\def
  \csname scr#1\endcsname{{\mathscr{#1}}}\ignorespaces}
\def\make@cal#1{\expandafter\def\csname cal#1\endcsname{{\mathcal
      #1}}\ignorespaces} 
\def\do@Letters#1{#1A #1B #1C #1D #1E #1F #1G #1H #1I #1J #1K #1L #1M
                 #1N #1O #1P #1Q #1R #1S #1T #1U #1V #1W #1X #1Y #1Z}
\def\do@letters#1{#1a #1b #1c #1d #1e #1f #1g #1h #1i #1j #1k #1l #1m
                 #1n #1o #1p #1q #1r #1s #1t #1u #1v #1w #1x #1y #1z}
\def\ol{\overline}
\newcommand{\isoto}{\stackrel{\sim}{\longrightarrow}}
\def\Qbar{\overline{\bbQ}}
\def\makeop#1{\expandafter\def\csname#1\endcsname
  {\mathop{\rm #1}\nolimits}\ignorespaces}
\DeclareMathSymbol{\twoheadrightarrow} {\mathrel}{AMSa}{"10}
\begin{document}

\title{Uniqueness of indecomposable idempotents in algebras with involution}

\author{Valentijn Karemaker}
\address{Mathematical Institute, Utrecht University, Utrecht, The Netherlands}
\email{V.Z.Karemaker@uu.nl}

\author{Akio Tamagawa}
\address{Research Institute for Mathematical Sciences, Kyoto University, Kyoto, Japan}
\email{tamagawa@kurims.kyoto-u.ac.jp}

\author{Chia-Fu Yu}
\address{Institute of Mathematics, Academia  Sinica and National Center for Theoretic Sciences, Taipei, Taiwan}
\email{chiafu@math.sinica.edu.tw}

 \keywords{Hermitian lattices, abelian varieties, unique decomposition.}
 \subjclass{16H20 (14K12 11E39 11G10)}

\maketitle
\setcounter{tocdepth}{2}

\begin{abstract}
We prove uniqueness of a decomposition of $1$ into indecomposable Hermitian idempotents in an order of a finite-dimensional $\mathbb{Q}$-algebra with positive involution, by generalising a result of Eichler on unique decomposition of lattices. We use this result to prove that polarised abelian varieties over any field admit a unique decomposition into indecomposable polarised abelian subvarieties, a result previously shown by Debarre and Serre with different methods and over algebraically closed fields. We prove that an analogous uniqueness result holds true for arbitrary polarised integral Hodge structures, and derive a consequence for their automorphism groups.
\end{abstract}


\section{Introduction}

Let $R^0$ be a finite-dimensional $\Q$-algebra with anti-involution $*$, that is, $(xy)^*=y^* x^*$ for $x,y \in R^0$. 
Let $R\subseteq R^0$ be an order of $R^0$, that is, $R$ is a subring of $R^0$ which is a $\Z$-lattice (of full rank). Let $\Tr_{R^0/\Q}:R^0\to \Q$ denote the usual (left) trace form.
Denote by
\[ I(R):=\{i\in R: i^2=i \}\supseteq I^*(R):=\{i\in I(R): i^*=i\} \]
the sets of idempotents in $R$ and Hermitian idempotents in $R$, respectively. We say $x,y \in R$ are \emph{orthogonal}, and write $x\bot y$, if $xy^*=0$. We call a nonzero element $i \in I^*(R)$ \emph{indecomposable} if whenever $i=j+k$ with $j,k\in I^*(R)$ such that $j\bot k$, either $j=0$ or $k=0$. In this note we prove the following result on the uniqueness of Hermitian idempotents in $R$.

\begin{theorem}\label{thm:UniDecIdem}
Assume that $\Tr_{R^0/\Q}(x x^*)>0$ for any $x\neq 0\in R^0$.  
If 
\[ 1=i_1+\dots +i_r, \quad \text{ for } i_\nu \in I^*(R) \text{ indecomposable and } i_\nu\bot i_{\nu'} \text{ whenever } \nu\neq \nu', \]  
then $\{i_\nu\}_{\nu}$ is uniquely determined. 
\end{theorem}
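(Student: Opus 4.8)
The plan is to reinterpret the statement as one about orthogonal decompositions of a positive-definite lattice carrying an action of $R$, and then to apply a module-theoretic strengthening of Eichler's uniqueness theorem for lattice decompositions.

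First I would equip $V:=R^0$ with the symmetric $\Q$-bilinear form $\langle x,y\rangle:=\Tr_{R^0/\Q}(xy^*)$. Symmetry follows from $\Tr_{R^0/\Q}(z^*)=\Tr_{R^0/\Q}(z)$ (conjugating the left-regular representation of $z^*$ by the $\Q$-linear involution $x\mapsto x^*$ turns it into the right-regular representation of $z$), and positive-definiteness is exactly the hypothesis $\Tr_{R^0/\Q}(xx^*)>0$. A short computation using cyclic invariance of the trace shows that each $i\in I^*(R)$ acts on $V$ by a self-adjoint idempotent, i.e. by the orthogonal projection onto the right ideal $iV$, and that right multiplication by $r$ is adjoint to right multiplication by $r^*$. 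Hence $1=\sum_\nu i_\nu$ with the $i_\nu$ pairwise orthogonal Hermitian idempotents is the same datum as the orthogonal direct-sum decomposition $R=\bigoplus_\nu i_\nu R$ of the lattice $R$ into right ideals, with $i_\nu$ recovered as the $\langle\cdot,\cdot\rangle$-orthogonal projection of $1$ onto $i_\nu R$. In particular $iR=i'R$ forces $i=i'$ for $i,i'\in I^*(R)$, so it suffices to show that the set of summands $\{i_\nu R\}$ is uniquely determined.

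Next I would match up the two notions of indecomposability. Each $eR$ with $e\in I^*(R)$ is an honest orthogonal lattice summand, since $R=eR\oplus(1-e)R$ orthogonally. I claim that $i$ is indecomposable in the sense of the theorem precisely when $iR$ admits no splitting into two nonzero orthogonal right-ideal summands. The substantive direction uses that any orthogonal, $R$-stable splitting of $iR$ has its projections lying in $\End_R(V)=R^0$ (left multiplications); such a projection is a self-adjoint idempotent preserving the lattice $R$, hence lies in $I^*(R)$, and this converts a splitting of $iR$ into a decomposition $i=j+k$ with $j,k\in I^*(R)$ and $j\perp k$. Thus the theorem is equivalent to the assertion that $R$, viewed as a positive-definite lattice equipped with a compatible $R$-module structure, decomposes uniquely into indecomposable orthogonal $R$-submodule summands.

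This last assertion is the module-theoretic generalization of Eichler's theorem, and it is the heart of the matter. Here the plan is to begin from the classical Eichler--Kneser decomposition $R=\bigoplus_c L_c$ of the underlying positive-definite lattice into orthogonally indecomposable sublattices, which is unique and refines every orthogonal lattice decomposition; then to overlay the module structure by forming the graph $\Gamma$ on the index set $\{c\}$ with an edge joining $c$ and $c'$ whenever $\langle L_c\, r,\, L_{c'}\rangle\neq 0$ for some $r\in R$. Any right-ideal orthogonal summand is a union of the $L_c$ by uniqueness of the bare decomposition, and being $R$-stable it must in fact be a union of entire connected components of $\Gamma$; conversely the span $M_\gamma:=\bigoplus_{c\in\gamma}L_c$ of each component is an $R$-submodule summand. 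Hence the $\Gamma$-components furnish the unique finest decomposition into indecomposable orthogonal submodule summands, and both $\{i_\nu R\}$ and any second family $\{j_\mu R\}$ must coincide with it, giving $\{i_\nu\}=\{j_\mu\}$. The main obstacle is exactly this layering: an indecomposable idempotent need not give a lattice-indecomposable summand (for instance $R=\Z[\sqrt{-1}]$ with complex conjugation is $\Z\perp\Z\sqrt{-1}$ as a lattice yet indecomposable as a module), so Eichler's theorem cannot be applied to $R$ verbatim — the discreteness of the order must be combined with the module structure, and it is the connectedness argument on $\Gamma$ that accomplishes this.
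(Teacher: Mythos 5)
Your proposal is correct, and the reduction step is essentially the paper's: both identify pairwise-orthogonal decompositions $1=\sum_\nu i_\nu$ in $I^*(R)$ with orthogonal decompositions of the lattice $R$ into ideal summands whose projections are the multiplications by the $i_\nu$ (the paper uses left ideals $Ri_\nu$ and right multiplication, you use the mirror image $i_\nu R$; both work). Where you genuinely diverge is in the Eichler-type uniqueness statement that does the real work. The paper proves a new, self-contained Hermitian generalisation of Eichler's theorem (Theorem~\ref{orthogonal}) for $O$-lattices in a positive-definite Hermitian $B$-module, running the Eichler--Kneser argument directly with the $B$-valued form $f(x,y)=xy^*$: primitivity and connectedness are defined by the vanishing or non-vanishing of $f$ itself, and the indecomposable $O$-summands are generated by the connectedness classes of $f$-primitive elements. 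You instead apply the classical scalar Eichler--Kneser theorem as a black box to $(R,\Tr_{R^0/\Q}(xy^*))$, obtaining the $\Z$-indecomposable components $L_c$, and then overlay the module structure by a second connectivity argument on the graph $\Gamma$ whose edges record $\langle L_c r, L_{c'}\rangle\neq 0$; your $\Z[\sqrt{-1}]$ example correctly identifies why the second layer is unavoidable. Both connectivity arguments are morally the same Eichler--Kneser idea, but yours buys economy (no new Hermitian lattice theorem is needed if one only wants Theorem~\ref{thm:UniDecIdem}), while the paper's buys a reusable statement about arbitrary Hermitian $O$-lattices that it also needs elsewhere. Your graph argument is sound: the symmetry of the edge relation follows from $\langle xr,y\rangle=\langle x,yr^*\rangle$ together with the fact that $R$ spans $R^0$, and a right-ideal orthogonal summand, being a union of $L_c$'s by the refinement property, is forced to be a union of $\Gamma$-components.

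One small point to repair: your justification of the symmetry of $\langle x,y\rangle=\Tr_{R^0/\Q}(xy^*)$ only shows $\Tr_{R^0/\Q}(z^*)=\Tr_{(R^0)^{\mathrm{opp}}/\Q}(z)$, i.e.\ it converts the left-regular trace of $z^*$ into the \emph{right}-regular trace of $z$. To conclude $\Tr_{R^0/\Q}(z^*)=\Tr_{R^0/\Q}(z)$ you additionally need the left and right traces to agree, which fails for general finite-dimensional algebras and holds here because positive-definiteness forces $R^0$ to be semi-simple (this is exactly the chain (pd*)\,$\Rightarrow$\,(nd)\,$\Leftrightarrow$\,(ss)\,$\Rightarrow$\,(l=r) of Proposition~\ref{prop:relation}). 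This is a one-line fix, but it is a genuine missing input rather than a consequence of cyclic invariance alone.
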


\begin{remark}
    Under the assumption of Theorem~\ref{thm:UniDecIdem}, the left trace form $\Tr_{R^0/\Q}$ coincides with the right trace form $\Tr_{(R^0) ^\mathrm{opp}}/\Q$; 
    see Proposition~\ref{prop:relation}.
\end{remark}

Let $V$ be a finite-dimensional $\Q$-vector space with a positive-definite symmetric form $f:V\times V \to \Q$. A classical result of Eichler \cite[Satz 2]{eichler:1952} (see also Kneser~\cite {Kneser:1954},\cite[Satz 27.2]{kneser}, and~Kitaoka~\cite[Theorem 6.7.1]{kitaoka})
shows uniqueness of an orthogonal decomposition of any lattice in $V$:


\begin{theorem}\label{thm:1.2}
Let $L$ be a $\Z$-lattice in $(V,f)$. If
\begin{equation}\label{eq:unique_dec_L}
L=\bot_{i=1}^r L_i     
\end{equation}
is an orthogonal decomposition into indecomposable $\Z$-sublattices in $V$, then the set of sublattices $L_i$ is uniquely determined by $L$, not just determined up to isomorphism.
\end{theorem}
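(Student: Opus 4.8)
The plan is to attach to $(L,f)$ a \emph{canonical} orthogonal decomposition built intrinsically from its indecomposable vectors, and then to show that every orthogonal decomposition into indecomposable sublattices must coincide with it. Call a nonzero $x\in L$ an \emph{indecomposable vector} if it admits no splitting $x=y+z$ with $y,z\in L\setminus\{0\}$ and $f(y,z)=0$. The one place where positive-definiteness is genuinely used is to guarantee existence: since $f$ is positive-definite, the set $\{v\in L:f(v,v)\le c\}$ is finite for every $c$, so the norm values $f(v,v)$ form a discrete, well-ordered subset of $\R_{\ge 0}$. Inducting on $f(x,x)$, and splitting $x=y+z$ with $0<f(y,y),f(z,z)<f(x,x)$ whenever $x$ is not indecomposable, I obtain that every nonzero vector of $L$ is a finite sum of indecomposable vectors; in particular the indecomposable vectors generate $L$ as a group.

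The pivotal step is the following one-line observation, which I will call the Key Lemma: if $L=M\bot N$ is any orthogonal decomposition into sublattices and $x\in L$ is an indecomposable vector, then $x\in M$ or $x\in N$. Indeed, writing $x=y+z$ with $y\in M$ and $z\in N$, orthogonality of $M$ and $N$ gives $f(y,z)=0$, so indecomposability of $x$ forces $y=0$ or $z=0$.

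Next I construct the canonical decomposition. On the set $P$ of indecomposable vectors, declare $x$ and $y$ adjacent when $f(x,y)\neq 0$, let $C_1,\dots,C_s$ be the connected components of the resulting graph, and set $M_a:=\langle C_a\rangle$, the subgroup of $L$ generated by $C_a$. By construction vectors in distinct components are orthogonal, so the $M_a$ are pairwise orthogonal; they generate $L$ because $P$ does; and pairwise orthogonality together with positive-definiteness makes the sum direct, giving $L=\bot_a M_a$. Each $M_a$ is an indecomposable sublattice: from a hypothetical splitting $M_a=S\bot T$ the Key Lemma (applied inside $M_a$) places every $x\in C_a$ in $S$ or in $T$, splitting $C_a$ into two mutually orthogonal, hence non-adjacent, pieces; connectedness of $C_a$ forces one piece to be empty, so one of $S,T$ is zero.

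Finally I prove uniqueness. Let $L=\bot_{i=1}^r L_i$ be an arbitrary orthogonal decomposition into indecomposable sublattices. By the Key Lemma each indecomposable vector lies in a unique $L_i$, defining a map $\phi\colon P\to\{1,\dots,r\}$; adjacent vectors are non-orthogonal and hence land in the same $L_i$, so $\phi$ is constant on each component and descends to $a\mapsto \phi(a)$ with $M_a\subseteq L_{\phi(a)}$. Setting $N_i:=\bot_{\phi(a)=i}M_a\subseteq L_i$, the partition of the components by $\phi$ yields $\bot_i N_i=\bot_a M_a=L=\bot_i L_i$; comparing these two orthogonal direct sums with $N_i\subseteq L_i$ forces $N_i=L_i$ for each $i$. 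Since $L_i$ is indecomposable and every $M_a$ is nonzero, exactly one component maps to each $i$, whence $L_i=M_{a(i)}$. Thus the given family $\{L_i\}$ is precisely the canonical family $\{M_a\}$, which is intrinsic to $L$; this proves uniqueness. I regard the construction and indecomposability of the canonical family (the component argument combined with the Key Lemma) as the conceptual heart of the proof, while the only analytic ingredient is the termination step of the first paragraph, which is exactly the point at which positive-definiteness is indispensable.
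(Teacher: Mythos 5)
Your proof is correct and follows essentially the same route as the paper's argument for its generalisation (Theorem~\ref{orthogonal}, specialised to $B=\Q$, $O=\Z$): your ``indecomposable vectors'' are the paper's primitive elements, your termination-by-norm argument is Lemma~\ref{lm:prim}, and your connected-component construction of the canonical decomposition plus the Key Lemma comparison is exactly the Eichler--Kneser argument the paper adapts. No substantive differences.
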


\noindent Following Theorem~\ref{thm:1.2}, if one writes \eqref{eq:unique_dec_L} as 
\begin{equation}\label{eq:uniqu_dec_L:2}
(L,f) \simeq \prod_{i=1}^r (L_i, f_i)^{e_i} \ \  
\end{equation}
where $(L_i, f_i)$ is an indecomposable $\Z$-sublattice with the restriction form $f_i$ of $f$, satisfying $(L_i,f_i)\not\simeq (L_j,f_j)$ for $i\neq j$,  
then we have 
\begin{equation}
    \label{eq:auto_grp_L}
    \Aut(L,f)\simeq \prod_{i=1}^r \Aut(L_i,f_i)^{e_i}\cdot S_{e_i}, 
\end{equation}
where $S_{e_i}$ denotes the symmetric group of the set $\{1,\dots, e_i\}$.\\

Inspired by the fact that some abelian varieties can be described by lattices, 
it is interesting to know whether  polarised abelian varieties also 
share analogous properties of $\Z$-lattices in a positive-definite symmetric space, namely uniqueness of an orthogonal decomposition as above. This is indeed the case, as previously proved by Debarre (using algebraic geometry) and by Serre (using algebraic methods) \cite{Debarre} over an algebraically closed field. 
As an application of 
Theorem~\ref{thm:UniDecIdem}, we also obtain the same result with an arbitrary ground field.

\begin{theorem}\label{thm:1.3}
    Every polarised abelian variety over any field $K$ admits the following unique decomposition property: if $(X,\lambda)=\prod_{i=1}^r (X_i,\lambda_i)$ is a decomposition into indecomposable polarised abelian subvarieties with their respective induced polarisations over $K$, then the set $\{X_i\}_i$ of abelian subvarieties is uniquely determined, not just determined up to isomorphism.
\end{theorem}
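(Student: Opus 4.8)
The plan is to reduce Theorem~\ref{thm:1.3} to the purely algebraic statement of Theorem~\ref{thm:UniDecIdem}, applied to the endomorphism algebra of $(X,\lambda)$ equipped with its Rosati involution. All objects and morphisms are taken over the fixed field $K$. First I would set $R^0:=\End^0(X)=\End(X)\otimes_{\Z}\Q$, a finite-dimensional $\Q$-algebra, and let $*$ be the Rosati involution attached to $\lambda$, that is $\phi^*=\lambda^{-1}\wh{\phi}\lambda$; this is an anti-involution of $R^0$, defined over any field. To check the hypothesis $\Tr_{R^0/\Q}(xx^*)>0$ of Theorem~\ref{thm:UniDecIdem}, I would invoke Rosati positivity, which gives positive-definiteness of the reduced-trace form $x\mapsto\Trd(xx^*)$; since the regular trace $\Tr_{R^0/\Q}$ is a positive rational multiple of the reduced trace on each simple factor of $R^0$, positivity is inherited (this is the circle of ideas around Proposition~\ref{prop:relation}). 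The Rosati involution does not in general preserve the order $\End(X)$, because $\lambda^{-1}\in R^0$ need not be integral, so I would instead work with
\[ R:=\End(X)\cap\End(X)^*, \]
an intersection of two orders, hence itself an order containing $1$, and $*$-stable by construction. The bookkeeping point is that every Hermitian idempotent $e$ of $\End(X)$ satisfies $e^*=e\in\End(X)^*$ and thus lies in $R$; as $R\subseteq\End(X)$ this yields $I^*(R)=I^*(\End(X))$.

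Next I would set up the dictionary between polarised decompositions and orthogonal Hermitian idempotents, over the arbitrary field $K$. Given a decomposition $(X,\lambda)=\prod_{i=1}^r(X_i,\lambda_i)$ with induced polarisations, the endomorphisms $e_i:=\iota_i\circ\pi_i$ (inclusion after projection) are idempotents of $\End(X)$ with $\sum_i e_i=1$ and $e_ie_j=0$ for $i\neq j$; a short computation with duals shows that the product form of $\lambda$ forces $e_i^*=e_i$, so the $e_i$ are mutually orthogonal Hermitian idempotents, lying in $R$. Conversely, from orthogonal Hermitian idempotents $1=\sum_i e_i$ in $\End(X)$ I would form the abelian subvarieties $X_i:=\im(e_i)$ and observe that the addition map $\prod_i X_i\to X$ is an isomorphism with inverse $x\mapsto(e_ix)_i$; here it is essential that the $e_i$ are genuine endomorphisms, i.e.\ elements of $\End(X)$ and not merely of $R^0$, which is exactly what makes this an isomorphism rather than only an isogeny. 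The Hermitian condition then makes the $X_i$ pairwise perpendicular for $\lambda$, so $\lambda$ is the product of its restrictions $\lambda_i$. These two constructions are mutually inverse, and under them $(X_i,\lambda_i)$ is indecomposable as a polarised abelian variety precisely when $e_i$ admits no splitting into two nonzero orthogonal Hermitian idempotents of $\End(X)$, i.e.\ precisely when $e_i$ is indecomposable in $I^*(R)$, using $I^*(R)=I^*(\End(X))$.

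With the dictionary in place the conclusion is immediate: a decomposition into indecomposable polarised abelian subvarieties corresponds to a presentation $1=\sum_{\nu}e_\nu$ of exactly the type considered in Theorem~\ref{thm:UniDecIdem}, whose hypotheses I have arranged to hold for the triple $(R^0,*,R)$. That theorem then gives that the set $\{e_\nu\}_\nu\subseteq R$ is uniquely determined; since $X_\nu=\im(e_\nu)$ recovers the subvariety from the idempotent, and conversely $e_\nu$ is the $\lambda$-orthogonal projection onto $X_\nu$, the set $\{X_\nu\}_\nu$ is uniquely determined as well. I expect the main obstacle to be the careful justification of the dictionary over an \emph{arbitrary} field (rather than an algebraically closed one as in the work of Debarre and Serre): specifically, verifying that a Hermitian idempotent yields a genuine polarised \emph{product} decomposition and that the Hermitian condition is exactly $\lambda$-orthogonality, together with the passage to the $*$-stable order $R$ and the reduced-versus-regular trace comparison required to land precisely within the hypotheses of Theorem~\ref{thm:UniDecIdem}.
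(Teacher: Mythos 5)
Your proposal is correct and follows essentially the same route as the paper: reduce to Theorem~\ref{thm:UniDecIdem} by establishing the bijection between polarised product decompositions of $(X,\lambda)$ and orthogonal decompositions $1=\sum_\nu i_\nu$ into Hermitian idempotents of $\End_K(X)$, with positivity of $\Tr_{R^0/\Q}(xx^*)$ supplied by Rosati positivity. The only cosmetic difference is your passage to the $*$-stable order $\End(X)\cap\End(X)^*$, which is unnecessary since Theorem~\ref{thm:UniDecIdem} only requires the involution on $R^0$ and defines $I^*(R)$ as the $*$-fixed idempotents lying in $R$; the paper applies it directly to $R=\End_K(X)$.
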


Similarly, this has the following consequence.

\begin{corollary}
If one writes $(X,\lambda)\simeq \prod_{i=1}^r (X_i,\lambda_i)^{e_i}$ as a product of indecomposable polarised abelian varieties over $K$ with $(X_i,\lambda_i)\not\simeq (X_j,\lambda_j)$ for $i\neq j$, then 
\begin{equation}\label{eq:autgp_pav}
    \Aut(X,\lambda)\simeq \prod_{i=1}^r \Aut(X_i,\lambda_i)^{e_i}\cdot S_{e_i}.
\end{equation}    
\end{corollary}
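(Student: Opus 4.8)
The plan is to mirror the passage from Eichler's uniqueness theorem (Theorem~\ref{thm:1.2}) to the automorphism formula \eqref{eq:auto_grp_L} in the lattice setting, now using Theorem~\ref{thm:1.3} in place of Theorem~\ref{thm:1.2}. Write the given decomposition as $(X,\lambda)=\prod_{i=1}^{r}\prod_{j=1}^{e_i}(X_{i,j},\lambda_{i,j})$, where $(X_{i,j},\lambda_{i,j})\simeq(X_i,\lambda_i)$ for every $j$, so that the $X_{i,j}$ are the indecomposable polarised abelian subvarieties occurring in the decomposition. The first step is to show that every $\phi\in\Aut(X,\lambda)$ permutes the set $\{X_{i,j}\}$. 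Indeed, applying $\phi$ yields $(X,\lambda)=\prod_{i,j}(\phi(X_{i,j}),\lambda|_{\phi(X_{i,j})})$, which is again a decomposition of $(X,\lambda)$ into indecomposable polarised abelian subvarieties with their induced polarisations, because $\phi$ preserves $\lambda$ and induces a polarised isomorphism $(X_{i,j},\lambda_{i,j})\isoto(\phi(X_{i,j}),\lambda|_{\phi(X_{i,j})})$, so indecomposability is transported. By the uniqueness in Theorem~\ref{thm:1.3} we must have $\{\phi(X_{i,j})\}=\{X_{i,j}\}$ as sets of subvarieties, hence $\phi$ realises a permutation $\sigma$ with $\phi(X_{i,j})=X_{\sigma(i,j)}$.

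Next I would check that $\sigma$ respects isomorphism classes. Since $\phi|_{X_{i,j}}$ is a polarised isomorphism onto $X_{\sigma(i,j)}$, we get $(X_i,\lambda_i)\simeq(X_k,\lambda_k)$ whenever $\sigma(i,j)$ lies in the block indexed by $k$. As the classes $(X_i,\lambda_i)$ are pairwise non-isomorphic, this forces $k=i$, so $\sigma$ acts within each block, $\sigma(i,j)=(i,\tau_i(j))$ with $\tau_i\in S_{e_i}$. This defines a group homomorphism $\pi\colon\Aut(X,\lambda)\to\prod_{i=1}^{r}S_{e_i}$, $\phi\mapsto(\tau_i)_i$.

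The homomorphism $\pi$ is surjective and split: fixing polarised isomorphisms $X_{i,j}\simeq X_i$ and permuting the copies within each block realises any element of $\prod_i S_{e_i}$ as a polarised automorphism of $(X,\lambda)$, giving a section of $\pi$. The kernel consists of those $\phi$ fixing every summand $X_{i,j}$; since $X$ is the internal direct product of the $X_{i,j}$, such a $\phi$ equals the product of its restrictions $\phi|_{X_{i,j}}\in\Aut(X_{i,j},\lambda_{i,j})\simeq\Aut(X_i,\lambda_i)$, whence $\ker\pi\simeq\prod_{i=1}^{r}\Aut(X_i,\lambda_i)^{e_i}$. Combining the split exact sequence with the permutation action of each $S_{e_i}$ on the corresponding $e_i$ factors yields the wreath-product description $\Aut(X,\lambda)\simeq\prod_{i=1}^{r}\Aut(X_i,\lambda_i)^{e_i}\cdot S_{e_i}$.

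The main obstacle I anticipate is the very first step: justifying that $\phi$ must carry summands to summands rather than spreading an indecomposable piece across several distinct subvarieties. This is exactly where Theorem~\ref{thm:1.3} is indispensable, since without uniqueness of the decomposition on the nose (not merely up to isomorphism) the automorphism $\phi$ could a priori produce a genuinely different decomposition. Once set-theoretic stability of $\{X_{i,j}\}$ is secured, the block-diagonality and the absence of off-diagonal homomorphisms follow formally from $X$ being the internal direct product of the $X_{i,j}$, and the remaining bookkeeping with the symmetric groups $S_{e_i}$ is routine.
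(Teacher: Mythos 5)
Your argument is correct and follows essentially the same route as the paper, which deduces the corollary directly from the uniqueness theorem exactly as in its lattice analogue (Corollary~\ref{autodecomposition}): every polarised automorphism permutes the indecomposable summands by Theorem~\ref{thm:1.3}, the permutation preserves isomorphism blocks, and the wreath-product structure follows. Your write-up merely makes explicit the split exact sequence and kernel identification that the paper leaves as "so the result follows."
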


As a generalisation of the statement of Theorem~\ref{thm:1.3} over the ground field $\C$, we show the following result.

\begin{theorem}\label{thm:1.5}
Let $(L,\psi,h)$ be a polarised integral Hodge structure. If 
\begin{equation}\label{eq:PHdg_Z}
L=\bot_{i=1}^r L_i     
\end{equation}
is an orthogonal decomposition into indecomposable polarised  integral Hodge substructures, then the set of sublattices $L_i$ is uniquely determined by $L$.
\end{theorem}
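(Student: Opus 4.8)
The plan is to treat Theorem~\ref{thm:1.5} as the Hodge-theoretic analogue of Theorem~\ref{thm:1.3} (of which the weight-one case recovers the statement over $\C$) and to reduce it to the uniqueness of Hermitian idempotents furnished by Theorem~\ref{thm:UniDecIdem}. To the polarised integral Hodge structure $(L,\psi,h)$ I would attach the endomorphism algebra $R^0:=\End(L\otimes\Q,h)$ of the associated rational Hodge structure, together with its order $R:=\End(L,h)$ of integral endomorphisms respecting the Hodge decomposition. The polarisation induces an anti-involution $*$ on $R^0$ by $\psi(\phi x,y)=\psi(x,\phi^*y)$; since every $\phi\in R^0$ is a morphism of Hodge structures, this adjoint preserves $R^0$ and satisfies $(\phi\chi)^*=\chi^*\phi^*$, so $(R^0,*)$ is a finite-dimensional $\Q$-algebra with anti-involution and $R$ is a $*$-stable order.

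Next I would verify the positivity hypothesis $\Tr_{R^0/\Q}(\phi\phi^*)>0$ for $\phi\neq0$. Let $C$ be the Weil operator of $h$, acting as $i^{p-q}$ on the $(p,q)$-component of $L\otimes\C$. The defining conditions of a polarisation guarantee that $\langle x,y\rangle:=\psi(Cx,y)$ is a symmetric positive-definite form on $L\otimes\R$. Because every $\phi\in R^0$ commutes with $C$, the adjoint $\phi^*$ defined via $\psi$ coincides with the adjoint of $\phi$ with respect to $\langle\,,\,\rangle$; hence the form $(\phi,\chi)\mapsto\tr_{L\otimes\Q}(\phi\chi^*)$ is positive-definite, as $\tr_{L\otimes\Q}(\phi\phi^*)$ is the squared Hilbert--Schmidt norm of $\phi$ for $\langle\,,\,\rangle$. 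This forces $(R^0,*)$ to have no nonzero nilpotent $*$-stable ideal, so $R^0$ is semisimple; on each simple factor the regular trace $\Tr_{R^0/\Q}$ is a positive multiple of the reduced trace $\Trd$, and positivity for the reduced trace then yields $\Tr_{R^0/\Q}(\phi\phi^*)>0$, precisely the hypothesis of Theorem~\ref{thm:UniDecIdem}.

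It then remains to set up a dictionary between orthogonal decompositions of $(L,\psi,h)$ and decompositions of $1$ into orthogonal indecomposable Hermitian idempotents of $R$. Given \eqref{eq:PHdg_Z}, the projections $\pi_i\colon L\to L_i\hookrightarrow L$ lie in $R$ (each $L_i$ being an integral Hodge substructure and a $\Z$-module direct summand), satisfy $\pi_i^*=\pi_i$ because the decomposition is $\psi$-orthogonal, are pairwise orthogonal in the sense $\pi_i\pi_j^*=0$ for $i\neq j$, and sum to $1$. Conversely, any $i\in I^*(R)$ yields a $\psi$-orthogonal integral Hodge substructure $iL$, and a decomposition $1=\sum_\nu i_\nu$ into orthogonal Hermitian idempotents recovers an orthogonal decomposition $L=\bot_\nu\,i_\nu L$. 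Under this correspondence $L_i$ is indecomposable as a polarised integral Hodge structure exactly when $\pi_i$ is indecomposable in $I^*(R)$. Applying Theorem~\ref{thm:UniDecIdem} to $(R,*)$ shows that the family $\{\pi_i\}_i$ is uniquely determined, whence so is the set $\{L_i\}_i=\{\pi_iL\}_i$.

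I expect the main obstacle to be the positivity step: one must check that the adjoint involution $*$ defined by $\psi$ is positive with respect to the \emph{algebra} trace $\Tr_{R^0/\Q}$ of Theorem~\ref{thm:UniDecIdem}, not merely with respect to the natural trace on the representation $L\otimes\Q$. This rests on the compatibility of $*$ with the Weil operator $C$ together with the comparison, on each simple factor, between the regular trace and the reduced trace (cf.\ the Remark following Theorem~\ref{thm:UniDecIdem}); once this is secured, the remaining identifications in the dictionary are formal.
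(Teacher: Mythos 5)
Your proposal is correct and follows essentially the same route as the paper: form $R=\End(L,h)$ and $R^0=\End(L_\Q,h)$, use the Weil operator $C$ to show the $\psi$-adjoint is a positive involution, match orthogonal Hodge decompositions with decompositions of $1$ into orthogonal Hermitian idempotents, and invoke Theorem~\ref{thm:UniDecIdem}; you are in fact somewhat more explicit than the paper about upgrading positivity from the representation trace on $L_\Q$ to the regular trace $\Tr_{R^0/\Q}$. The only (cosmetic) difference is that the paper first reduces to the pure-weight case via $L=\oplus_n L^n$ and $\psi(L^n,L^m)=0$ for $n\neq m$, whereas you treat the general case directly, which works because the polarisation is by definition a direct sum of weight-graded pieces.
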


We recall the definitions of polarised integral Hodge structures and their orthogonal decompositions in Section~\ref{sec:P}.

\begin{corollary}
If 
\begin{equation}\label{eq:dec_phs}
(L,\psi,h)\simeq \prod_{i=1}^r (L_i,\psi_i, h_i)^{e_i}     
\end{equation}
is an orthogonal decomposition into indecomposable polarised integral Hodge substructures with $(L_i,\psi_i, h_i)\not\simeq (L_j,\psi_j, h_j)$ for $i\neq j$, 
then 
\begin{equation}\label{eq:autgp_phs}
    \Aut(L,\psi, h)\simeq \prod_{i=1}^r \Aut(L_i,\psi_i,h_i)^{e_i}\cdot S_{e_i}.
\end{equation}   
\end{corollary}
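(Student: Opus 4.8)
The plan is to deduce the structure \eqref{eq:autgp_phs} directly from the uniqueness statement of Theorem~\ref{thm:1.5}, in the same way that \eqref{eq:auto_grp_L} follows from Theorem~\ref{thm:1.2}. Write the decomposition \eqref{eq:dec_phs} explicitly as an orthogonal sum $L=\bot_{i=1}^{r}\bot_{j=1}^{e_i}L_{i,j}$, in which each summand $(L_{i,j},\psi_{i,j},h_{i,j})$ is a copy of $(L_i,\psi_i,h_i)$, and fix once and for all isomorphisms $\iota_{i,j}\colon(L_i,\psi_i,h_i)\isoto(L_{i,j},\psi_{i,j},h_{i,j})$ of polarised integral Hodge structures.

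First I would observe that any $\phi\in\Aut(L,\psi,h)$ carries $L=\bot_{i,j}L_{i,j}$ to $L=\bot_{i,j}\phi(L_{i,j})$, which is again an orthogonal decomposition into indecomposable polarised integral Hodge substructures. By Theorem~\ref{thm:1.5} the set of indecomposable summands is uniquely determined by $L$, so $\phi$ permutes the collection $\{L_{i,j}\}$. Because $\phi$ preserves $\psi$ and $h$ and the summands are mutually orthogonal, each restriction $\phi|_{L_{i,j}}\colon L_{i,j}\to\phi(L_{i,j})$ is an isomorphism of polarised integral Hodge structures; hence $\phi(L_{i,j})$ lies in the same isomorphism class as $L_{i,j}$. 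As the $(L_i,\psi_i,h_i)$ are pairwise non-isomorphic, $\phi$ can only permute the $e_i$ copies $\{L_{i,j}\}_{j}$ of $(L_i,\psi_i,h_i)$ among themselves, and therefore induces a permutation $\sigma_i\in S_{e_i}$ for each $i$.

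Transporting the restricted isomorphisms through the fixed identifications, I would set $a_{i,j}:=\iota_{i,\sigma_i(j)}^{-1}\circ(\phi|_{L_{i,j}})\circ\iota_{i,j}\in\Aut(L_i,\psi_i,h_i)$. The assignment $\phi\mapsto\big(\sigma_i,(a_{i,j})_{j}\big)_{i}$ then gives a homomorphism from $\Aut(L,\psi,h)$ to $\prod_{i=1}^{r}\big(\Aut(L_i,\psi_i,h_i)\wr S_{e_i}\big)=\prod_{i=1}^{r}\big(\Aut(L_i,\psi_i,h_i)^{e_i}\cdot S_{e_i}\big)$. It is injective, since $\phi$ is determined by its restrictions to the $L_{i,j}$. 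For surjectivity, given any such tuple one defines $\phi|_{L_{i,j}}:=\iota_{i,\sigma_i(j)}\circ a_{i,j}\circ\iota_{i,j}^{-1}$ and extends by orthogonality; since $(i,j)\mapsto(i,\sigma_i(j))$ is a bijection, this map preserves $\psi$, and being built from Hodge isomorphisms on each block it preserves $h$, so it is a genuine element of $\Aut(L,\psi,h)$ realising the prescribed tuple.

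The argument is formal once Theorem~\ref{thm:1.5} is available; the point that requires care — and which I expect to be the only real obstacle — is the assertion that a global automorphism cannot mix non-isomorphic indecomposable summands. This is precisely what the combination of the uniqueness theorem with the fact that the restrictions of $\phi$ are isometries of polarised Hodge structures supplies. Compatibility with composition is immediate from the block description, yielding the claimed isomorphism \eqref{eq:autgp_phs}.
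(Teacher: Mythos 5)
Your argument is correct and follows essentially the same route as the paper: the paper deduces this corollary (as it does the lattice analogue, Corollary~\ref{autodecomposition}) directly from the uniqueness theorem by observing that any automorphism must permute the indecomposable orthogonal summands, and can only exchange mutually isomorphic ones, yielding the wreath-product structure. Your write-up simply makes the resulting isomorphism with $\prod_i \Aut(L_i,\psi_i,h_i)^{e_i}\cdot S_{e_i}$ explicit; no gaps.
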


In Section 2, we organise the relations among semi-simplicity of a finite-dimensional algebra (with or without involution), and non-degeneracy and positive-definiteness of the trace forms in question. 
To prove Theorem~\ref{thm:UniDecIdem}, in Section~3 we generalise Eichler's result (Theorem~\ref{thm:1.2}) to positive-definite Hermitian $B$-modules for an arbitrary semi-simple $\Q$-algebra with positive involution (Theorem~\ref{orthogonal}). In Sections 4, 5 and~6, respectively, we show that Theorem~\ref{orthogonal} implies Theorem~\ref{thm:UniDecIdem}, that Theorem~\ref{thm:UniDecIdem} implies Theorem~\ref{thm:1.3}, and that  Theorem~\ref{thm:UniDecIdem} implies Theorem~\ref{thm:1.5}.

\subsection*{Acknowledgements}
The first author was partially supported by the Dutch Research Council (NWO) through grant VI.Veni.192.038.
The second author was supported by JSPS KAKENHI Grant Number 20H01796. 
The third author was supported by
the NSTC grants 109-2115-M-001-002-MY3. 
This work was supported by the Research Institute for Mathematical Sciences, an International Joint Usage/Research Center located in Kyoto University.

\section{Traces, involutions, semi-simplicity, non-degeneracy and positive-definiteness}
\subsection{Various traces of finite-dimensional algebras}\

Let $F$ be a field of characteristic $0$ and~$A$ a finite-dimensional $F$-algebra.
Let $L$ be a field extension of $F$ and
set $A_L:=A\otimes_F L$, a finite-dimensional $L$-algebra.
Let~$V$ be a finite-dimensional $L$-vector space of dimension $n$.
We let $\End_L(V)$ act on $V$ from the left. 
Let $\Tr_V: \End_L(V) \to L$ denote  the usual trace, via $\End_L(V)\simeq \Mat_n(L)$. 

For any representation $\rho: A_L \to \End_L(V)$, i.e.~a homomorphism of $L$-algebras, we denote
$\Tr_{\rho}: =\Tr_V\circ \rho: A_L \to L$.
By abuse of notation, we denote $\Tr_{\rho}|_A: A\to L$ also by $\Tr_{\rho}$.

\begin{example}\
\begin{enumerate}
\item Let $L=F, V=A$ and $\rho_A^{\rm left}: A\to \End_F(A), a \mapsto l_a$, where $l_a(x)=ax$ for $x\in A$ is the left $a$-multiplication.
We call 
\[ \Tr_{A/F}:=\Tr_{\rho_A^{\rm left}}: A\to F \] 
the usual left trace.
This is intrinsically determined only by $A/F$.

\item We call $\Tr_{A^{\rm opp}/F}: A=A^{\rm opp}\to F$ the usual right trace. The trace $\Tr_{A^{\rm opp}/F}(a)$
coincides with $\Tr_A(r_a)$, where $r_a$ is the right $a$-multiplication.
This is intrinsically determined only by $A/F$.

\begin{remark}\
    \begin{itemize}
        \item[(a)] Let $A$ be the 3-dimensional $F$-subalgebra of $\Mat_2(F)$ consisting of all upper-triangular matrices. Then, for $a = (a_{ij})\in A$ (so that $a_{21}=0), \Tr_{A/F}(a)=2a_{11}+a_{22}$, while $\Tr_{A^{\rm opp}/F}(a)=a_{11}+2a_{22}$.  
        \item[(b)] If $A$ is semi-simple, we have $\Tr_{A/F}=\Tr_{A^{\rm opp}/F}$; see Proposition~\ref{prop:relation}.
    \end{itemize}
\end{remark}

\item Assume $A$ is a central simple algebra over $F$. Take an isomorphism $\rho: A_{\overline F} \to \Mat_n(\overline F)$ of $\overline F$-algebras. It is well known that $\Trd_{A/F}:=\Tr_{\rho}: A \to \overline F$ does not depend on the choice of $\rho$, and that its image is contained in $F$, hence it induces $\Trd_{A/F}: A \to F$, which we call the reduced trace.
This is intrinsically determined only by $A/F$ and one has $\Tr_{A/F}=\Tr_{A^{\rm opp}/F}=n \Trd_{A/F}$.

\item More generally, assume $A$ is semi-simple. Then we can take an isomorphism $\rho: A_{\overline F} \to \prod_{i=1}^r \Mat_{n_i}(\overline F)$. By abuse of notation, we denote the composite of $\rho$ and the (block-diagonal) embedding $\prod_{i=1}^r \Mat_{n_i}(\overline F)\to \Mat_{n_1+...+n_r}(\overline F)$ again by $\rho$. One can prove that $\Trd_{A/F}:=\Tr_{\rho}: A \to \overline F$ does not depend on the choice of $\rho$ and that its image is contained in $F$, hence it induces $\Trd_{A/F}: A \to F$. We also call $\Trd_{A/F}: A \to F$ the reduced trace.
This is intrinsically determined only by $A/F$.

\item Let $F=\mathbb{Q}$. Let $X$ be an abelian variety over a field $K$, and set $A:=\End_K(X)_{\mathbb Q}$ to be the endomorphism algebra of $X$ over $K$. Put $L=\mathbb{Q}_l$ and  $V=V_l(X)$. Then we have a natural representation $\rho_l: A_L\to \End_L(V)$, which induces $\Tr_{\rho_l}: A\to \Q_l$. It is well known that the image of $\Tr_{\rho_l}$ is contained in $\mathbb{Q}$, hence it induces $\Tr_{\rho_l}: A\to\mathbb Q$, and that $\Tr_{\rho_l}: A\to\mathbb Q$ is independent of the choice of $l$. We write $\Tr_X:=\Tr_{\rho_l}: A\to \mathbb{Q}$.

We remark that the map $\Tr_X$ is \emph{not} intrinsically determined only by $A/\mathbb{Q}$. For example, let $X$ be a $g$-dimensional abelian variety over $K$ with $\End_K(X)=\mathbb{Z}$, hence $A:=\End_K(X)_{\mathbb Q}=\mathbb Q$. Then $\Tr_X: \mathbb{Q} \to \mathbb{Q}$ turns out to be the $2g$-multiplication, which is not determined only by $A/\mathbb{Q}$.

For example, let $Y$ be a QM abelian surface over K with $\End_K(Y)$ an order of a quaternion algebra $D$ over $\mathbb{Q}$, and let $Z$ be an $h$-dimensional abelian variety over $K$ with $\End_K(Z)=\mathbb{Z}$. Consider the $(4+3h)$-dimensional abelian variety $X:=Y^2\times Z^3$ over~$K$. Then $A:=\End_K(X)_{\mathbb Q}$ is isomorphic to $\Mat_2(D)\times \Mat_3(\mathbb{Q})$. For simplicity, let $p: A\to \Mat_2(D)$ and $q: A\to \Mat_3(\mathbb{Q})$ be projections, and set $\alpha:=\Trd_{\Mat_2(D)/\mathbb{Q}} \circ p$ and $\beta:=\Trd_{\Mat_3(\mathbb{Q})/\mathbb{Q}}\circ q$. (Thus, $\alpha, \beta: A\to\mathbb{Q}$.) Then one calculates:
\[ \Tr_{A/\mathbb Q}=\Tr_{A^{\rm opp}/\mathbb{Q}}=4\alpha+3\beta, \quad
\Trd_{A/\mathbb{Q}}=\alpha+\beta, \quad
\Tr_X=2\alpha+(2h)\beta. \]
\end{enumerate}
\end{example}
\subsection{Relations between semi-simplicity, non-degeneracy and positive-definiteness}\

Let $F$ be a field of characteristic $0$ and $A$ be a finite-dimensional $F$-algebra.

\begin{proposition}\label{prop:relation} \ 
\begin{enumerate}
    \item Consider the following conditions:
\begin{itemize}
    \item[(nd)] The map $A\times A \to F, (x,y)\mapsto \Tr_{A/F}(xy)$ is non-degenerate.
    \item[(ss)] $A$ is semi-simple.
     \item[(l=r)] $\Tr_{A/F}=\Tr_{A^{\rm opp}/F}$.  
\end{itemize}
Then {\rm (nd)} $\iff$ {\rm (ss)} $\implies$ {\rm (l=r)}.

\item Assume that an involution $*: A \to A$ (with $(xy)^*=y^*x^*$) is given. Consider the following conditions:

\begin{itemize}
    \item[(nd*)] The map $A\times A \to F, (x,y)\mapsto \Tr_{A/F}(xy*)$ is non-degenerate.
    \item[(l=l*)] $\Tr_{A/F}(a)=\Tr_{A/F}(a^*)$ for any $a\in A$.
\end{itemize}
Then {\rm (nd*)} $\iff$ {\rm (nd)}; and  {\rm (l=l*)} $\iff$ {\rm (l=r)}.

\item Assume that an involution $*: A\to A$ (with $(xy)^*=y^*x^*$) is given and that $F$ is a subfield of $\mathbb{R}$. Consider the following condition:

\begin{itemize}
    \item[(pd*)]  $\Tr_{A/F}(xx^*)>0$ for any $x \in A -\{0\}$.
\end{itemize}
Then {\rm (pd*)} $\implies$ {\rm (nd*)}.    
\end{enumerate}
\end{proposition}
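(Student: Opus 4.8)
The plan is to treat the three parts in order, since the later statements become largely formal once the equivalence (nd) $\iff$ (ss) of part (1) is available.

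For part (1), I would first prove the classical equivalence (nd) $\iff$ (ss). For the direction $\neg$(ss) $\Rightarrow$ $\neg$(nd), I would use that the Jacobson radical $\Rad(A)$ of a finite-dimensional algebra is a nilpotent two-sided ideal: for $x \in \Rad(A)$ and any $y \in A$ the product $xy$ again lies in $\Rad(A)$, so the operator $l_{xy}$ is nilpotent and $\Tr_{A/F}(xy) = 0$; thus a nonzero radical sits inside the kernel of the trace pairing and witnesses its degeneracy. For the converse (ss) $\Rightarrow$ (nd), I would base change to $\overline{F}$ (which preserves non-degeneracy of a bilinear form) and use the Wedderburn decomposition $A_{\overline{F}} \cong \prod_i \Mat_{n_i}(\overline{F})$, on each factor of which the regular trace pairing is a nonzero scalar multiple of the standard, non-degenerate, matrix trace pairing. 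For (ss) $\Rightarrow$ (l=r), I would again pass to $\overline{F}$ and note that on each matrix factor the left and right regular traces coincide, since $\tr$ is invariant under transposition and $\Mat_n(\overline{F})^{\mathrm{opp}} \cong \Mat_n(\overline{F})$ via the transpose.

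Part (2) is purely formal. Because $*$ is an $F$-linear automorphism of the underlying vector space of $A$ (as $** = \id$), the pairing of (nd*) is the pairing of (nd) precomposed in the second variable with a bijection, so their radicals correspond and (nd*) $\iff$ (nd). For (l=l*) $\iff$ (l=r), the key identity is $\Tr_{A^{\mathrm{opp}}/F}(a) = \Tr_{A/F}(a^*)$, valid because $* \colon A \isoto A^{\mathrm{opp}}$ is an isomorphism of $F$-algebras and the left regular trace is invariant under such isomorphisms; substituting this identity converts (l=l*) into (l=r) and back.

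Part (3) is nearly immediate, and is the only point where the hypothesis $F \subseteq \R$ enters. Writing $B(x,y) := \Tr_{A/F}(xy^*)$ for the pairing of (nd*), if $B$ were degenerate its radical would contain some $x \neq 0$, forcing $B(x,x) = \Tr_{A/F}(xx^*) = 0$; but (pd*) gives $B(x,x) > 0$, a contradiction, so $B$ is non-degenerate. I expect the only genuine content to lie in the equivalence (nd) $\iff$ (ss) of part (1), where one must correctly invoke the nilpotence of the radical and the Wedderburn decomposition (with the characteristic-$0$ hypothesis guaranteeing separability, and hence that each matrix factor contributes non-degenerately); parts (2) and (3) then reduce to bookkeeping with the anti-automorphism $*$ together with a one-line positivity argument.
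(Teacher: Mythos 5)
Your proposal is correct and follows essentially the same route as the paper: the nilpotence of the Jacobson radical to show $\neg$(ss) $\Rightarrow$ $\neg$(nd), base change to $\overline{F}$ and the Wedderburn decomposition into matrix algebras for (ss) $\Rightarrow$ (nd) and (l=r), the observation that $*$ is a linear bijection and an algebra isomorphism $A \isoto A^{\mathrm{opp}}$ for part (2), and the one-line positivity argument for part (3). No gaps.
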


\begin{proof}
\begin{enumerate}
    \item Let $J$ denote the Jacobson radical of $A$, which is defined to be the intersection of all maximal left ideals of $A$ and is known to coincide with the intersection of all maximal right ideals of A. (In particular, $J$ is a two-sided ideal of $A$.)  The following facts are well known:
    \begin{itemize}
        \item[(i)] $J$ is a nilpotent ideal, i.e.~$J^n=0$ for some $n$. In particular, any element of $J$ is nilpotent. \item[(ii)] $A/J$ is semi-simple. 
        \item[(iii)] $A$ is semi-simple if and only if $J=0$.
    \end{itemize}
Since $x\in J$ is nilpotent, the left $x$-multiplication $ l_x \in \End_F(A)$ is also nilpotent, hence $\Tr_{A/F}(x)=0$. Thus, $\Tr_{A/F}(J)=0$. Since $AJ=J$, the map $A\times J \to F, (x,y)\mapsto \Tr_{A/F}(xy)$ is zero. In particular, (nd) implies $J=0$, which by (iii) implies (ss).

Next, assume (ss) holds. To show (nd) and (l=r) hold for $A/F$, it suffices to show (nd) and (l=r) hold for $A_{\overline F}/\overline{F}$. But since (ss) implies that $A_{\overline F}$ is isomorphic to $\prod_{i=1}^r \Mat_{n_i}(\overline F)$, we see that it suffices to show (nd) and (l=r) for $\Mat_n(\overline F)$, which are elementary.

\item Since $*: A\to A$ is an isomorphism of $F$-vector spaces, (nd*) $\iff$ (nd) is clear. Next, since the involution $*: A\to A$ yields an isomorphism of $F$-algebras $A \to A^{\rm opp}$, we see that $\Tr_{A/F}\circ * =\Tr_{A^{\rm opp}/F}$. Thus, (l=l*) $\iff$ (l=r) follows.

\item Assume (pd*) holds. Let $x\in A$. If $\Tr_{A/F}(xy^*)=0$ for all $y\in A$, then in particular $\Tr_{A/F}(xx^*)=0$, hence $x=0$ by (pd*). Similarly, if $\Tr_{A/F}(yx^*)=0$ for all $y\in A$, then in particular $\Tr_{A/F}(xx^*)=0$, hence $x=0$ by (pd*).
\end{enumerate}
\end{proof}

\begin{remark}
The condition (l=r) does not imply (ss) in general. Any commutative finite-dimensional $F$-algebra admitting nontrivial nilpotent elements provides an example of this.    
\end{remark}

\section{Uniqueness of orthogonal decompositions of positive-definite Hermitian lattices}

Let $B$ be a finite-dimensional 
$\Q$-algebra together with a positive involution $*$, that is, such that 
$\Tr_{B/\Q} (b b^*)>0$ for any nonzero element $b\in B$. By Proposition~\ref{prop:relation}, the algebra $B$ is automatically semi-simple. Note that when $B$ is semi-simple, positivity of $*$ is equivalent to the condition that $\Trd_{B/\Q} (b b^*)>0$ for any nonzero element $b\in B$, where $\Trd_{B/\Q}$ denotes the reduced trace from $B$ to $\Q$. However, we shall not use the reduced trace $\Trd_{B/\Q}$ in this paper.
We fix an order $O$ of $B$, that is, $O$ is a subring of $B$ that is finitely generated as a $\Z$-module and spans $B$ over $\Q$.   



Let $V$ be a faithful finite left $B$-module. In particular, $V\neq 0$. 
Let $f:V\times V\to B $ be a ($B$-valued) Hermitian form on $V$. This means that $f$ is a $\Q$-bilinear pairing on $V$ satisfying 
\begin{equation}\label{eq:hermitian}
    \begin{split}
     f(ax,y) & =af(x,y) \quad \text{and} \quad f(y,x)=f(x,y)^* \quad  \text{ for any $x$, $y\in V$, $a\in B$}.  
    \end{split}
\end{equation}
We call the Hermitian form $f$  \emph{positive definite} if the composition $\Tr_{B/\Q} \circ f:V\times V\to \Q$ is positive definite. 

We now assume that the Hermitian form $f$ is positive definite. 
A $\Z$-lattice is a finitely generated torsion-free $\Z$-module; a $\Z$-lattice in a finite-dimensional $\Q$-vector space $W$ means a finitely generated $\Z$-submodule in $W$ which spans $W$ over $\Q$. Further, a (left) $O$-lattice is a finitely generated left $O$-module which is a $\Z$-lattice, and an $O$-lattice $L$ in $V$ is a $\Z$-lattice in~$V$ which is also an $O$-submodule of $V$, that is, $O L \subseteq L$. 
An $O$-submodule~$M$ of an $O$-lattice $L$ is called an $O$-sublattice of $L$; then $M$ is an $O$-lattice in the $B$-submodule $B M$ of $V$, possibly of smaller dimension over $\Q$.

\begin{definition}\label{def:3.1}
\begin{enumerate}
    \item A nonzero $O$-lattice $L$ in $V$ is \emph{indecomposable} if whenever
$L=L_1+L_2$ and $f(L_1,L_2)=0$ for some $O$-sublattices $L_1$ and $L_2$, one has either $L_1=0$ or $L_2=0$. 
    \item Let $L$ be an $O$-lattice in $V$. A nonzero element $x\in L$ is \emph{primitive} if whenever $x=y+z$ for some elements $y,z\in L$ with $f(y,z)=0$, one has either $y=0$ or $z=0$.  
    \item Two primitive elements $x$ and $y$ in $L$ (which may coincide) are \emph{connected}, denoted $x\sim y$, if there exist primitive elements $z_0,\dots, z_n\in L$ such that $z_0=x$, $z_n=y$ and $f(z_{i-1},z_i)\neq 0$ for $i=1,\dots, n$. One sees that $\sim$ is an equivalence relation.  
\end{enumerate}
\end{definition}

\begin{remark}
    An alternative definition of primitive elements of $L$ is as follows: one replaces the condition $f(y,z)=0$ in Definition~\ref{def:3.1}(2) with $\Tr_{B/\Q}(f(Oy,Oz))=0$.
    To see the equivalence, first note that if $f(y,z)=0$, then $f(Oy,Oz)=0$ and $\Tr_{B/\Q}(f(Oy,Oz))=0$. Conversely, if $\Tr_{B/\Q}(f(Oy,Oz))=0$, then $\Tr_{B/\Q}(Of(y,z))=\Tr_{B/\Q}(f(Oy,z))=0$ and hence $f(y,z)~=~0$, by non-degeneracy of 
    $\Tr_{B/\Q}(xy)$.    
\end{remark}

\begin{lemma}\label{lm:prim}
Let $(V,f)$ be a positive-definite Hermitian $B$-module, and $L$ an $O$-lattice in $V$.
Then every nonzero element $x\in L$ is a finite sum of primitive elements in $L$. 
\end{lemma}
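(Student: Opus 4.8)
The plan is to reduce everything to the associated $\Q$-valued quadratic form $q:=\Tr_{B/\Q}\circ f$ and then argue by induction on the size of $q(x,x)$. First I would record that $q\colon V\times V\to\Q$ is a positive-definite \emph{symmetric} bilinear form: positive-definiteness is exactly the hypothesis that $(V,f)$ is positive definite, while symmetry follows from $f(y,x)=f(x,y)^*$ together with the identity $\Tr_{B/\Q}(a)=\Tr_{B/\Q}(a^*)$, i.e.\ condition (l=l*) of Proposition~\ref{prop:relation}. The latter applies because the positive involution forces $B$ to be semi-simple (as already noted in the text, via Proposition~\ref{prop:relation}), so that (l=r), and hence (l=l*), holds.

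The key observation driving the induction is additivity of $q$ along orthogonal splittings: if $x=y+z$ with $y,z\in L$ and $f(y,z)=0$, then $q(y,z)=q(z,y)=0$ (the second because $f(z,y)=f(y,z)^*=0$), so that $q(x,x)=q(y,y)+q(z,z)$. Since $q$ is positive definite, whenever $y$ and $z$ are both nonzero we get $0<q(y,y),q(z,z)<q(x,x)$; that is, a nontrivial orthogonal splitting strictly decreases the value of $q$. Thus if $x$ is not already primitive, I would split $x=y+z$ into two nonzero orthogonal pieces of strictly smaller $q$-value, invoke the inductive hypothesis on $y$ and $z$ separately, and concatenate their primitive decompositions to obtain one for $x$. (One should note in passing that the pieces $y,z$ produced in Definition~\ref{def:3.1}(2) lie in $L$ by definition, so the inductive hypothesis applies to them verbatim.)

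To make this induction legitimate I would first clear denominators. Choosing a $\Z$-basis $e_1,\dots,e_m$ of $L$ and an integer $N$ with $N\,q(e_i,e_j)\in\Z$ for all $i,j$, one gets $N\,q(x,x)\in\Z_{\ge 0}$ for every $x\in L$, with $N\,q(x,x)\ge 1$ precisely when $x\neq 0$. The induction then runs on the positive integer $N\,q(x,x)$: any $x$ that is itself primitive is a one-term sum and serves as the base case, and the inductive step is exactly the orthogonal splitting above.

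The main (and essentially only) obstacle is ensuring that the induction is well-founded, i.e.\ that one cannot keep splitting forever. This is precisely what positivity of $f$ buys us: the strict inequalities $q(y,y),q(z,z)<q(x,x)$ together with the integrality of $N\,q(\cdot,\cdot)$ guarantee termination after finitely many steps. Everything else in the argument is formal, so I expect no difficulty beyond setting up this descent correctly.
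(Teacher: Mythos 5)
Your proof is correct and follows essentially the same route as the paper: split a non-primitive $x$ orthogonally, use additivity and strict decrease of $\Tr_{B/\Q}f(\cdot,\cdot)$, and conclude termination from the discreteness of its values on $L$. The only cosmetic difference is that you establish discreteness by clearing denominators with a $\Z$-basis of $L$, whereas the paper notes that $\Tr_{B/\Q}(f(L,L))$ is a finitely generated $\Z$-submodule of $\Q$, hence of the form $e\Z$; these are equivalent.
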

\begin{proof}
   If $x$ is primitive, then we are done.
   If $x$ is not primitive, then $x=y+z$ with nonzero elements $y$ and $z$ in $L$ such that $f(y,z)=0$. Then we have $f(x,x)=f(y,y)+f(z,z)$ and hence
\[ \Tr_{B/\Q} f(x,x)= \Tr_{B/\Q} f(y,y)+ \Tr_{B/\Q} f(z,z).\]
Since $f$ is positive definite, 
we have $\mathrm{Tr}_{B/\Q}(f(y,y))<\mathrm{Tr}_{B/\Q}(f(x,x))$. 
If $y$ or $z$ is not primitive, we continue with the same process. 
We claim  that this process terminates after finitely many steps.
Since $L\neq 0$ is a finitely generated $\Z$-submodule of~$V$ and $f$ is positive definite,  
$f(L,L)$ is a nonzero finitely generated $\Z$-submodule of~$B$, the module $\mathrm{Tr}_{B/\Q}(f(L,L)) := \{ \mathrm{Tr}_{B/\Q}(f(m,n)) : m,n \in L \}$ is 
a nonzero finitely generated $\mathbb{Z}$-submodule of~$\mathbb{Q}$,
and we have 
$\mathrm{Tr}_{B/\Q}(f(L,L))=e\Z$ for some $e\in \Q_{>0}$ . This means that $\mathrm{Tr}_{B/\Q}(f(x,x))\in e\Z_{\ge 0}$ for any $x\in L$.  
So after finitely many steps, the element becomes primitive 
and the claim is proved. 
\end{proof}


The following result generalises the original result of Eichler (Theorem~\ref{thm:1.2}) 
and its extensions~\cite[Theorem 2.4.9]{KirschmerHab} and \cite[Theorem 2.1]{ibukiyama-karemaker-yu} to the setting where $B$ is either a totally real field, 
a CM field, or a totally definite quaternion algebra over a totally real field with its canonical involution, see also \cite[Remark~2.3]{ibukiyama-karemaker-yu}. 
We adapt the ideas of the proofs of Eichler~\cite{eichler:1952} and Kneser~\cite{Kneser:1954}.

\begin{theorem}\label{orthogonal}
Assumptions and notation being as above, 
any $O$-lattice $L\subseteq V$ has an orthogonal decomposition
\[
L=L_1\perp \cdots \perp L_r
\]
into indecomposable $O$-sublattices.
The set of lattices $\{L_i\}_{1\leq i\leq r}$ is uniquely determined by~$L$.
\end{theorem}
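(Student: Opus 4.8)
The plan is to construct, directly from the intrinsic data $(L,f)$, a canonical orthogonal decomposition into indecomposables, and then to show that any orthogonal decomposition of $L$ into indecomposables must coincide with it; this yields existence and uniqueness simultaneously. The building blocks are the connectedness classes of primitive elements from Definition~\ref{def:3.1}(3).

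First I would record a localisation principle for primitive elements: if $L=M_1\perp M_2$ is any orthogonal splitting into $O$-sublattices and $x\in L$ is primitive, then writing $x=x_1+x_2$ with $x_j\in M_j$ gives $f(x_1,x_2)=0$, so primitivity forces $x\in M_1$ or $x\in M_2$. Iterating, in any orthogonal decomposition $L=\perp_i L_i$ each primitive element lies in exactly one $L_i$. Moreover, if $x,y$ are primitive with $f(x,y)\neq 0$ they must lie in the same $L_i$ (orthogonal components kill $f$), so along any connecting chain consecutive terms share a component; hence an entire connectedness class lies in a single component.

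Next I would carry out the construction. Let $\{C_\alpha\}$ be the connectedness classes and set $M_\alpha:=\langle C_\alpha\rangle_O\subseteq L$, a finitely generated $O$-sublattice since $L$ is Noetherian over $O$. Distinct classes are orthogonal — if $x\in C_\alpha$, $y\in C_\beta$ had $f(x,y)\neq 0$ then $x\sim y$ — so $f(M_\alpha,M_\beta)=0$ for $\alpha\neq\beta$, and by the Hermitian identities this extends to orthogonality of the $\Q$-subspaces $BM_\alpha$. Since $\Tr_{B/\Q}\circ f$ is positive definite, mutually orthogonal subspaces are linearly independent, so the sum $\sum_\alpha M_\alpha$ is direct and the number of nonzero classes is at most $\dim_\Q V$. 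By Lemma~\ref{lm:prim} every element of $L$ is a finite sum of primitive elements, each lying in some $M_\alpha$, whence $L=\perp_\alpha M_\alpha$. To see each $M_\alpha$ is indecomposable, suppose $M_\alpha=N_1\perp N_2$; every $x\in C_\alpha$, being primitive in $L$, is primitive in $M_\alpha$, hence lies in $N_1$ or $N_2$, and consecutive terms of a connecting chain (which stay inside $C_\alpha$ by the localisation principle) cannot straddle the orthogonal splitting. As $C_\alpha$ is a single class, it lies entirely in one $N_j$, forcing the other summand to vanish.

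Finally, for uniqueness, given an arbitrary orthogonal decomposition $L=\perp_{i=1}^r L_i$ into indecomposables, the discussion above shows each class $C_\alpha$, and hence $M_\alpha=\langle C_\alpha\rangle_O$, is contained in a unique $L_{\phi(\alpha)}$. Grouping the classes by the value of $\phi$ gives $\bigoplus_{\phi(\alpha)=i} M_\alpha\subseteq L_i$ for each $i$; summing over $i$ recovers the equality $\perp_\alpha M_\alpha=\perp_i L_i=L$ of direct sums, which forces $L_i=\bigoplus_{\phi(\alpha)=i} M_\alpha$ for every $i$. Since $L_i$ is indecomposable and each $M_\alpha$ is nonzero, exactly one class maps to $i$, so $L_i=M_\alpha$; thus $\{L_i\}=\{M_\alpha\}$. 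I expect the main obstacle to be the indecomposability step: one must verify that primitivity in $L$ is inherited by the canonical pieces and that a connectedness chain cannot cross an orthogonal splitting, which is exactly where positive-definiteness (through Lemma~\ref{lm:prim} and the non-degeneracy of $\Tr_{B/\Q}\circ f$) is essential.
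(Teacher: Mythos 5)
Your proposal is correct and follows essentially the same route as the paper's own proof: both build the canonical decomposition from the $O$-sublattices generated by the connectedness classes of primitive elements, prove orthogonality and indecomposability of these pieces via the fact that a primitive element cannot straddle an orthogonal splitting, and deduce uniqueness by showing every class (hence every canonical piece) lands inside a single indecomposable summand of any given decomposition, which indecomposability then forces to coincide with it. No substantive differences or gaps.
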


\begin{proof}

Let $S$ be the set of all primitive elements of $L$, and let $\{S_\lambda\}_{\lambda \in \Lambda}$ be the set of equivalence classes of $S$ with respect to $\sim$. 
Let $L_\lambda$ be the $O$-sublattice generated by $S_\lambda$.
Note that $f(S_\lambda,S_{\mu})=0$ for $\lambda\neq \mu \in \Lambda$. Indeed, if $x,y$ are primitive and $f(x,y)\neq 0$, then $x$ and $y$ are connected.
Using Equation~\eqref{eq:hermitian}, we see that $f(L_\lambda,L_\mu)=0$. 

Since $S$ generates $L$ as a $\Z$-module by Lemma~\ref{lm:prim}, we obtain a decomposition into $O$-sublattices
\[ L=\bot_{\lambda\in \Lambda} L_\lambda. \]
Using dimension counting, we see that $\Lambda$ is a finite set. 
Next, we show that any primitive element in~$L_{\lambda}$ belongs 
to $S_{\lambda}$. If $y\in L_{\lambda}\subseteq L$ is primitive, 
then $y\in S_{\mu}$ for some $\mu\in \Lambda$, but if $\lambda\neq \mu$, then 
$y\in S_{\mu}\subseteq L_{\mu}$, so $f(y,y)=0$ and $y=0$, a contradiction.
Now if $L_{\lambda}=N_1\perp N_2$ for some $O$-sublattices $N_1\neq 0$, $N_2\neq 0$,
then whenever $x+y$ with $x\in N_1$, $y\in N_2$ is primitive, we have $x=0$ or $y=0$. This means that any element $x\in S_\lambda$ lies in either $N_1$ or $N_2$.
So if $x,y \in S_\lambda$, $f(x,y)\neq 0$ and $x\in N_1$, then $y \in N_1$; hence $S_{\lambda}\subseteq N_1$ and 
$N_1=L_{\lambda}$, a contradiction, so $L_{\lambda}$ is indecomposable.
Now if $L=\perp_{\kappa \in K}M_{\kappa}$ for other indecomposable 
lattices $M_{\kappa}$, then any primitive element $x$ of $L$ is contained 
in some $M_{\kappa}$ by definition of primitivity. 
By the same reasoning as before, if $x \in M_{\kappa}$ is primitive, then 
any primitive $y\in L$ connected to $x$ belongs to $M_{\kappa}$. 


This means that there is a map $\iota: \Lambda \to K$ such that $L_{\lambda}\subseteq M_{\iota(\lambda)}$. Since 
\[L = \perp_{\lambda\in\Lambda}L_{\lambda} \subseteq \perp_{\kappa\in \iota(\Lambda)}M_\kappa \subseteq  \perp_{\kappa\in K}M_{\kappa} = L, 
\]
we have $M_{\kappa}=\perp_{\lambda\in\iota^{-1}(\kappa)}L_{\lambda}$. 
Now, as $M_{\kappa}$ is indecomposable, $\iota$ must be a bijection and $L_{\lambda}=M_{\iota(\lambda)}$. 
\end{proof}

\begin{remark}
If we let $L^{\Z}_\lambda$ be the $\Z$-lattice generated by $S_\lambda$, then by the same reasoning, we have $L=\bot_{\lambda} L^\Z_\lambda$. Since $L^{\Z}_\lambda\subseteq L_\lambda$ and $\perp_{\lambda} L_\lambda^\Z=L=\perp_{\lambda} L_\lambda$, one obtains $L^{\Z}_\lambda=L_\lambda$. This shows that the $\Z$-lattice $L^{\Z}_\lambda$ is automatically an $O$-lattice. 
\end{remark}

\begin{corollary}\label{autodecomposition}
Assumptions and notation being as in Theorem~\ref{orthogonal}, suppose that
$L$ has an orthogonal decomposition 
\[
L=\perp_{i=1}^{r} \left (\perp_{j=1}^{e_i}L_{ij} \right )
\]
where the $L_{ij}$ are indecomposable $O$-sublattices of $L$, arranged in such a way that 
$L_{ij}$ is isometric to $ L_{i'j'}$ if and only if $i=i'$. 
Then we have 
\[
\Aut(L)\cong \prod_{i=1}^{r}\Aut(L_{i1})^{e_i}\cdot S_{e_i},
\]
where $S_{e_i}$ is the symmetric group on $e_i$ letters and 
$\Aut(L_{i1})^{e_i}\cdot S_{e_i}$ is a semi-direct product 
with respect to the natural permutation action of $S_{e_i}$ on $\Aut(L_{i1})^{e_i}$.
\end{corollary}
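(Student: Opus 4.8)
The plan is to use the uniqueness statement of Theorem~\ref{orthogonal} to show that every automorphism of $L$ permutes the canonical indecomposable pieces, and then to organise this permutation action into the asserted wreath-product structure. Write $\Aut(L)$ for the group of $O$-linear isometries of $(L,f)$. First I would observe that if $\phi\in\Aut(L)$, then applying $\phi$ to the given decomposition yields $L=\phi(L)=\perp_{i,j}\phi(L_{ij})$, which is again an orthogonal decomposition into indecomposable $O$-sublattices (orthogonality and indecomposability are preserved because $\phi$ is an isometric $O$-automorphism). By the uniqueness part of Theorem~\ref{orthogonal}, the set $\{L_{ij}\}$ is intrinsic to $L$, so $\phi$ merely permutes it: there is a permutation $\pi_\phi$ of the index set with $\phi(L_{ij})=L_{\pi_\phi(i,j)}$. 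Since $\phi$ is an isometry, $L_{ij}$ and $L_{\pi_\phi(i,j)}$ are isometric, so by the arrangement hypothesis they share the same first index $i$. Hence $\pi_\phi$ preserves each block $\perp_{j=1}^{e_i}L_{ij}$ and restricts to an element $\sigma_i(\phi)\in S_{e_i}$ acting on the $j$-index. This produces a group homomorphism
\[
\rho\colon \Aut(L)\longrightarrow \prod_{i=1}^r S_{e_i},\qquad \phi\mapsto (\sigma_i(\phi))_i.
\]

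Next I would identify the kernel. By construction $\ker\rho$ consists of those $\phi$ fixing each $L_{ij}$ setwise, so restriction gives an injective homomorphism $\ker\rho\hookrightarrow\prod_{i,j}\Aut(L_{ij})$; conversely any family of isometries $\phi_{ij}\in\Aut(L_{ij})$ assembles (by orthogonality) into a global isometry of $L$ fixing each piece, so $\ker\rho\cong\prod_{i=1}^r\prod_{j=1}^{e_i}\Aut(L_{ij})$. Fixing once and for all isometries $\alpha_{ij}\colon L_{i1}\isoto L_{ij}$ with $\alpha_{i1}=\id$ (these exist since $L_{ij}$ and $L_{i1}$ are isometric), conjugation by $\alpha_{ij}$ identifies $\Aut(L_{ij})$ with $\Aut(L_{i1})$, giving $\ker\rho\cong\prod_{i=1}^r\Aut(L_{i1})^{e_i}$.

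Then I would prove $\rho$ is surjective and split by exhibiting a section. Given $\sigma=(\sigma_i)_i\in\prod_i S_{e_i}$, define $s(\sigma)\in\Aut(L)$ to act on the piece $L_{ij}$ as the isometry $\alpha_{i\,\sigma_i(j)}\circ\alpha_{ij}^{-1}\colon L_{ij}\isoto L_{i\,\sigma_i(j)}$, and extend orthogonally to $L$; one checks that $s(\sigma)$ is an isometry and that $s$ is a homomorphism with $\rho\circ s=\id$ (the normalisation $\alpha_{i1}=\id$ makes this clean). Thus $\rho$ is a split surjection with kernel $\prod_i\Aut(L_{i1})^{e_i}$, so $\Aut(L)\cong\bigl(\prod_i\Aut(L_{i1})^{e_i}\bigr)\rtimes\prod_i S_{e_i}=\prod_{i=1}^r\bigl(\Aut(L_{i1})^{e_i}\cdot S_{e_i}\bigr)$. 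To pin down the semidirect-product action as the stated permutation action, I would finally compute the conjugation $s(\sigma)\,\phi\,s(\sigma)^{-1}$ for $\phi\in\ker\rho$ corresponding to $(\phi_{ij})$, and verify that it corresponds to the permuted family; this is where the section $s$ built from the $\alpha_{ij}$ exhibits $S_{e_i}$ permuting the $e_i$ factors $\Aut(L_{i1})$ exactly as the natural permutation action.

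The conceptual heart of the argument, and the only genuinely nontrivial step, is the first one: that an automorphism must permute the indecomposable summands. This is precisely where the uniqueness of the decomposition (Theorem~\ref{orthogonal}) is indispensable, since without it an automorphism could in principle mix the pieces in a way not captured by a permutation. The remaining steps, namely identifying the kernel, building the section, and matching the conjugation action with the permutation action, are bookkeeping; the only point demanding a little care is checking that $s$ is a homomorphism and that the induced action is the standard one, which the normalisation $\alpha_{i1}=\id$ and the block orthogonality render routine.
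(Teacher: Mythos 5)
Your proposal is correct and follows essentially the same route as the paper: the paper's proof consists precisely of the observation that, by the uniqueness in Theorem~\ref{orthogonal}, any $\epsilon\in\Aut(L)$ permutes the $L_{ij}$ within each isometry class, and then declares that the wreath-product structure follows. Your write-up simply fills in the bookkeeping (kernel, section, and identification of the semidirect-product action) that the paper leaves implicit.
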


\begin{proof}
By Theorem \ref{orthogonal}, we see that for any element $\epsilon \in \Aut(L)$ and $i=1,\ldots, r$, 
there exists $\tau_i\in S_{e_i}$ such that 
$\epsilon(L_{ij})=L_{i\tau_i(j)}$ for $j=1, \ldots, e_i$, so the result follows.
\end{proof}

\section{Proof of Theorem~\ref{thm:UniDecIdem}}\label{sec:Pf11}

Let the notation be as in Introduction. Note that for $i,j\in I^*(R)$, the condition $ij=0$ is equivalent to either $ij^*=0$, $i^* j^*=0 $, or $ji=0$.

Since $\Tr_{R^0/\Q}(x x^*)>0$ for any nonzero $x\in R^0$, by Proposition~\ref{prop:relation}, $R^0$ is a semi-simple $\Q$-algebra and 
\begin{equation}
    \Tr_{R^0/\Q}(x^*)=\Tr_{R^0/\Q}(x) \quad \forall\, x\in R^0. 
\end{equation}

Put $V=R^0$ as a left $R^0$-module; then $L:=R$ is an $R$-lattice in $V$. Define the $\Q$-bilinear pairing $f:V\times V\to R^0$ by $f(x,y):=xy^*$. Since $*$ is a positive involution, 
$f$ is an $R^0$-valued positive-definite Hermitian form. 

Let $1=i_1+\dots +i_r$ be a decomposition with nonzero $i_\nu\in I^*(R)$ and $i_\nu \bot i_{\nu'}$ for $\nu\neq \nu'$. Put $L_\nu:=R i_\nu$. Since $f(R i_\nu, R i_{\nu'})=Ri_\nu i_{\nu'}^* R=0$ for $\nu\neq \nu'$, we obtain an orthogonal decomposition of $L$ into $R$-sublattices:
\begin{equation}\label{eq:OrthDecL}
    L=L_1 \bot \dots \bot L_r.
\end{equation}
Conversely, suppose we have an orthogonal decomposition of $L$ as in \eqref{eq:OrthDecL}.
Consider the composition
\[ \begin{CD}
   \varphi_\nu: R @>{p_{L_\nu}}>> L_\nu @>{i_{L_\nu}}>> R ,
   \end{CD}
\]
where ${p_{L_\nu}}$ is the projection onto $L_{\nu}$ and $i_{L_\nu}$ is the natural inclusion from $L_\nu$ to $R$.
Since $\varphi_\nu$ is $R$-linear, there is a unique element $i_\nu\in R$ such that $\varphi_\nu(x)=x \cdot i_\nu$, namely $i_\nu=\varphi_\nu(1)$. From $\varphi_\nu ^2=\varphi_\nu$, we obtain $i_\nu^2=i_\nu$.
From ${\rm id}_R=\varphi_1+\cdots+\varphi_r$, we obtain $1=i_1+\cdots+i_r$. We also have $L_\nu=\varphi_\nu(R)=R\cdot i_\nu$. And it follows from $L_\nu \bot L_{\nu'}$ for $\nu\neq \nu'$ that $i_{\nu} \cdot i_{\nu'}^*=f(i_\nu, i_{\nu'})=0$, so that $i_\nu \bot i_{\nu'}$. 

We deduce that $i_\nu^*=i_\nu$ for all $\nu$:
we have
$i_{\nu}=i_{\nu}\cdot 1^*=i_{\nu}(i_1^*+\cdots+i_r^*)=i_{\nu}i_{\nu}^*$, 
hence $i_{\nu}^*=(i_{\nu}i_{\nu}^*)^* =i_{\nu}i_{\nu}^*=i_\nu$. 

Thus, we have proved the following:
\begin{lemma}\label{lm:Dec_L_and_Dec_1}
    Let $(R,*)$ be as above.
    There is a one-to-one correspondence between orthogonal decompositions of $L=\bot_{\nu=1}^r L_\nu$ into $R$-sublattices and orthogonal decompositions $1=\sum_{\nu=1}^r i_\nu$ with $i_\nu\in I^*(R)$.
\end{lemma}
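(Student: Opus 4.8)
The plan is to exhibit the correspondence explicitly in both directions and then check that the two assignments are mutually inverse. The crucial structural fact I would exploit is that $L=R$ is free of rank one as a left $R$-module, so that every endomorphism of $L$ in the category of left $R$-modules is right multiplication by a uniquely determined element of $R$, that element being recovered by evaluating at $1$. This dictionary lets me translate module-theoretic data (orthogonal projections) into ring-theoretic data (Hermitian idempotents) and back, and the whole proof amounts to checking that the dictionary respects the relevant relations.

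First I would treat the direction from idempotents to decompositions. Given an orthogonal decomposition $1=\sum_{\nu} i_\nu$ with $i_\nu\in I^*(R)$, I set $L_\nu:=Ri_\nu$. Writing $x=\sum_\nu x i_\nu$ shows $L=\sum_\nu L_\nu$, and orthogonality of the idempotents — which, since the $i_\nu$ are Hermitian, gives $i_\nu i_{\nu'}=0$ for $\nu\neq\nu'$ via the equivalences recorded at the start of this section — forces the sum to be direct. For orthogonality with respect to $f$, I compute $f(a i_\nu,b i_{\nu'})=a i_\nu i_{\nu'}^* b^*=0$ using $i_\nu i_{\nu'}^*=0$, so that $f(L_\nu,L_{\nu'})=0$, i.e.\ $L=\bot_\nu L_\nu$.

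Next, for the reverse direction, I start from an orthogonal decomposition $L=\bot_\nu L_\nu$ into $R$-sublattices; positive-definiteness of $f$ guarantees that the orthogonal sum is direct, so the projections $p_{L_\nu}$ are well defined. Composing with the inclusions gives left-$R$-linear endomorphisms $\varphi_\nu=i_{L_\nu}\circ p_{L_\nu}$ of $R$, and by the freeness observation $\varphi_\nu(x)=x\cdot i_\nu$ with $i_\nu:=\varphi_\nu(1)$. The relations $\varphi_\nu^2=\varphi_\nu$, $\sum_\nu\varphi_\nu=\mathrm{id}_R$, and $\varphi_\nu(R)=L_\nu$ translate respectively into $i_\nu^2=i_\nu$, $\sum_\nu i_\nu=1$, and $L_\nu=R i_\nu$, while $f(L_\nu,L_{\nu'})=0$ yields $i_\nu i_{\nu'}^*=0$, i.e.\ $i_\nu\bot i_{\nu'}$.

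The step I expect to be the main (if small) obstacle is showing that these idempotents are Hermitian, $i_\nu^*=i_\nu$, since nothing in the module decomposition a priori involves $*$ on the idempotents themselves. The idea is to feed $1^*=1$ and the orthogonality relations into each other: from $i_\nu=i_\nu\cdot 1^*=i_\nu\sum_{\nu'}i_{\nu'}^*$ together with $i_\nu i_{\nu'}^*=0$ for $\nu'\neq\nu$, I obtain $i_\nu=i_\nu i_\nu^*$, which is manifestly $*$-symmetric, whence $i_\nu^*=(i_\nu i_\nu^*)^*=i_\nu i_\nu^*=i_\nu$. Finally I would verify that the two constructions are mutually inverse: beginning with $\{i_\nu\}$ and forming $L_\nu=R i_\nu$, the projection onto $R i_\nu$ along the complementary summands sends $x=\sum_\mu x i_\mu$ to $x i_\nu$, so $\varphi_\nu(1)=i_\nu$ is recovered; and conversely, passing from a decomposition to its idempotents and back returns $L_\nu=R i_\nu$. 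This establishes the claimed bijection.
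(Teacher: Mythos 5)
Your proposal is correct and follows essentially the same route as the paper: identifying left-$R$-linear endomorphisms of $L=R$ with right multiplications, setting $L_\nu=Ri_\nu$ in one direction and $i_\nu=\varphi_\nu(1)$ in the other, and deriving $i_\nu^*=i_\nu$ from $i_\nu=i_\nu\cdot 1^*=i_\nu i_\nu^*$. No gaps; the argument matches the paper's proof step for step.
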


By Lemma~\ref{lm:Dec_L_and_Dec_1} and Theorem~\ref{orthogonal}, we have now proved Theorem~\ref{thm:UniDecIdem}. 

\section{Proof of Theorem~\ref{thm:1.3}}

Let $(X,\lambda)$ be a polarised abelian variety over any field $K$. Set $R:=\End_K(X)$ and $R^0:=R\otimes_\Z \Q$, and let $*$ be the Rosati involution induced by $\lambda$, i.e. $x^*:=\lambda^{-1}x^t\lambda$ for any $x \in R^0$. 
It is well known that the $\mathbb Q$-bilinear form $\Tr_X(xy^*)$ on $R^0$ (see Example 2.1.(5)) is positive definite, which implies that 
the $\Q$-bilinear form $\Tr_{R^0/\Q}(xy^*)$
is positive definite. 

Let $(X,\lambda)=\prod_{\nu=1}^r (X_\nu,\lambda_\nu)$ be a decomposition into nonzero polarised abelian subvarieties, 
namely, $X=\prod_{\nu=1}^r X_{\nu}$ is a decomposition into abelian subvarieties and $\lambda: X \to X^t$ decomposes as  $(\lambda_{\nu})_{\nu=1,...,r}: \prod_{\nu=1}^r X_{\nu} \to \prod_{\nu=1}^r X_{\nu}^t$.
For each $\nu$, put
\[ 
\begin{CD}
i_\nu: X @>{p_{X_\nu}}>> X_{\nu} @>{i_{X_v}}>> X, 
\end{CD}
\]
where $p_{X_\nu}$ is the projection of $X$ onto $X_{\nu}$ and $i_{X_\nu}:X_{\nu} \hookrightarrow X$ is the inclusion. By definition, we have $1=\sum_{\nu=1}^r i_{\nu}$. 

We show that $i_\nu\in I^*(R)$ for each~$\nu$. As $p_{X_{\nu}} \circ i_{X_{\nu}} = {\rm id}_{X_{\nu}}$, we have $i_{\nu}\in I(R)$. 
By the functorial properties of dual abelian varieties, we have
\[ i_{X_\nu}^t=p_{X_\nu^t}: X^t \to X_{\nu}^t, \quad p_{X_\nu}^t=i_{X_\nu^t}: X_{\nu}^t \to X^t. \]
As $\lambda=(\lambda_{\nu})_{\nu=1,...,r}$, we have $p_{X_{\nu}^t}\circ\lambda=\lambda_{\nu}\circ p_{X_{\nu}}$ and $\lambda\circ i_{X_{\nu}}=i_{X_{\nu}^t}\circ\lambda_{\nu}$, hence 
\[i_{\nu}^t\circ\lambda=(i_{X_{\nu}}\circ p_{X_{\nu}})^t\circ\lambda=i_{X_{\nu}^t}\circ p_{X_{\nu}^t}\circ\lambda=i_{X_{\nu}^t}\circ\lambda_{\nu}\circ p_{X_{\nu}}=\lambda\circ i_{X_{\nu}}\circ p_{X_{\nu}}=\lambda\circ i_{\nu}.
\]
Therefore, $i_{\nu}^*=\lambda^{-1} \circ i_{\nu}^t\circ\lambda=i_{\nu}$.  
Since $p_{X_{\nu}}\circ i_{X_{\nu'}}=0$ for $\nu\neq\nu'$, we also have
\[
i_{\nu}i_{\nu'}^*=i_{\nu}i_{\nu'}=i_{X_{\nu}}\circ p_{X_{\nu}}\circ i_{X_{\nu'}}\circ p_{X_{\nu'}}=0,
\]
so $i_{\nu}\perp i_{\nu'}$ for $\nu\neq\nu'$. 

Conversely, suppose that we have a decomposition $1=\sum_{\nu=1}^r i_\nu$ with nonzero $i_\nu\in I^*(R)$, such that $i_\nu\bot i_{\nu'}$ for $\nu\neq \nu'$. 
Write $X_\nu:=i_\nu(X)$; then $X=\prod_{\nu=1}^r X_\nu$ (hence $X^t=\prod_{\nu=1}^r X_{\nu}^t$). We claim this is a decomposition into polarised abelian varieties with respective polarisations $\lambda_\nu:=\lambda|_{X_\nu}$. 
More precisely, $\lambda_{\nu}=p_{X_{\nu}^t}\circ\lambda\circ i_{X_{\nu}}$, and we claim that $\lambda=(\lambda_{\nu})_{\nu=1,...,r}$, or, equivalently, 
\[
\lambda=\sum_{\nu=1}^r i_{X_{\nu}^t}\circ\lambda_{\nu}\circ p_{X_\nu}, 
\]
where $p_{X_\nu}, i_{X_\nu}, p_{X_{\nu}^t}, i_{X_{\nu}^t}$ are defined as above.

By hypothesis, we have 
\[
\lambda\circ i_{\nu}=\lambda\circ i_{\nu}^*=i_{\nu}^t\circ \lambda, 
\]
hence 
\[
\begin{split}
\lambda &=\lambda\circ\sum_{\nu=1}^r i_\nu = \lambda\circ \sum_{\nu=1}^r i_\nu^2 = \sum_{\nu=1}^r \lambda \circ i_\nu^2 \\
& =\sum_{\nu=1}^r i_{\nu}^t\circ\lambda\circ i_\nu=\sum_{\nu=1}^r (i_{X_\nu}\circ p_{X_{\nu}})^t\circ \lambda\circ(i_{X_\nu}\circ p_{X_{\nu}}) \\
& =\sum_{\nu=1}^r  i_{X_{\nu}^t} \circ p_{X_\nu^t} \circ \lambda\circ i_{X_\nu}\circ p_{X_{\nu}}=\sum_{\nu=1}^r i_{X_{\nu}^t}\circ \lambda_{\nu}\circ p_{X_{\nu}}, 
\end{split}
\]
as desired. 

One checks that one construction is the inverse of the other construction. This yields the following lemma.

\begin{lemma}\label{lm:corresp_dec} There is a one-to-one correspondence between the set of decompositions $(X,\lambda)=\prod_{\nu=1}^r (X_\nu,\lambda_\nu)$ into polarised abelian subvarieties and the set of decompositions 
$1=\sum_{\nu} i_\nu$ with nonzero $i_\nu\in I^*(R)$ and $i_\nu \bot i_{\nu'}$ for $\nu\neq \nu'$. Moreover, the polarised abelian subvariety 
$(X_\nu,\lambda_{\nu})$ is indecomposable if and only if the corresponding idempotent $i_v$ is indecomposable.      
\end{lemma}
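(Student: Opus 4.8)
The plan is to take the two explicit constructions carried out above and verify that they are mutually inverse, thereby establishing the claimed bijection, and then to deduce the indecomposability statement by applying this bijection to refinements of a single factor. For the inverse check in one direction, starting from a decomposition $(X,\lambda)=\prod_{\nu=1}^r(X_\nu,\lambda_\nu)$ I would form $i_\nu=i_{X_\nu}\circ p_{X_\nu}$ and reconstruct $X_\nu':=i_\nu(X)$; since $i_\nu(X)=i_{X_\nu}(p_{X_\nu}(X))=X_\nu$ as an abelian subvariety of $X$, the reconstructed polarisation $\lambda_\nu'=p_{X_\nu^t}\circ\lambda\circ i_{X_\nu}$ is exactly $\lambda_\nu$, so the round trip is the identity. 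In the other direction, starting from $1=\sum_\nu i_\nu$ with nonzero orthogonal $i_\nu\in I^*(R)$, I would set $X_\nu:=i_\nu(X)$ and must check that the idempotent $i_{X_\nu}\circ p_{X_\nu}$ attached to the product decomposition $X=\prod_\nu X_\nu$ recovers $i_\nu$ itself. This is where the Hermitian and orthogonality hypotheses do the work: each $i_\nu$ is an idempotent with image $X_\nu$, the $i_\nu$ sum to $1$, and $i_\nu i_{\nu'}^*=0$ together with $i_{\nu'}^*=i_{\nu'}$ gives $i_\nu i_{\nu'}=0$ for $\nu\neq\nu'$, so the $i_\nu$ are precisely the projector system of this product and the recovered idempotent is $i_\nu$. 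The nonzero conditions match because $i_\nu\neq0\iff X_\nu\neq0$.

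For the \emph{Moreover} assertion I would use that the bijection respects refinement. Suppose first that $(X_\nu,\lambda_\nu)=(Y_1,\mu_1)\times(Y_2,\mu_2)$ is a nontrivial decomposition into polarised abelian subvarieties; this refines the original into $X=Y_1\times Y_2\times\prod_{\mu\neq\nu}X_\mu$, and applying the first part of the lemma to this refined decomposition produces nonzero orthogonal Hermitian idempotents $j,k,\{i_\mu\}_{\mu\neq\nu}$ summing to $1$. Comparing with $1=\sum_\mu i_\mu$ forces $i_\nu=j+k$, a nontrivial splitting of $i_\nu$ in $I^*(R)$. Conversely, given a nontrivial splitting $i_\nu=j+k$ with nonzero orthogonal $j,k\in I^*(R)$, I would first note that $jk=kj=0$ (so $j i_\nu=i_\nu j=j$) and that $i_\nu i_\mu=0$ for $\mu\neq\nu$ force $j i_\mu=(j i_\nu)i_\mu=0$, and likewise for $k$; hence $1=j+k+\sum_{\mu\neq\nu}i_\mu$ is again an orthogonal decomposition into Hermitian idempotents, and the first part of the lemma turns it into a refinement of $(X,\lambda)$ in which $X_\nu=i_\nu(X)=j(X)\times k(X)$ splits nontrivially as a polarised abelian subvariety. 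Taking contrapositives gives that $(X_\nu,\lambda_\nu)$ is indecomposable if and only if $i_\nu$ is.

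I do not expect a genuine obstacle here, as the conceptual content---that orthogonal Hermitian idempotents of $R$ encode orthogonal decompositions of $(X,\lambda)$---is already supplied by the computations preceding the lemma. The one step that warrants care is the verification that the two constructions are inverse \emph{on the nose} rather than merely up to isomorphism, and in particular that the projector system of the product $X=\prod_\nu i_\nu(X)$ is pinned down to be $\{i_\nu\}$; this is exactly where the equalities $i_\nu^*=i_\nu$ and $i_\nu i_{\nu'}=0$ are indispensable. Once this is in place, the indecomposability equivalence is a formal consequence of applying the bijection to the sub-decompositions of each factor.
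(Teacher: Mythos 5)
Your proposal is correct and follows essentially the same route as the paper: the paper's proof consists of the two constructions you invoke, followed by the remark that ``one checks that one construction is the inverse of the other,'' and your write-up simply supplies that check (via the projector-system argument pinned down by $i_\nu^*=i_\nu$ and $i_\nu i_{\nu'}=0$) together with the refinement argument for the indecomposability equivalence, which the paper leaves implicit. No gaps.
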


By Theorem~\ref{thm:UniDecIdem} and Lemma~\ref{lm:corresp_dec}, 
Theorem~\ref{thm:1.3} is proved.

\section{Proof of Theorem~\ref{thm:1.5}} \label{sec:P}
In this section we prove uniqueness of an orthogonal decomposition of a polarised integral Hodge structure into a product of indecomposable polarised integral Hodge substructures.

\subsection{Polarised Hodge structures}\label{sec:P.1}

We recall the definition of polarised $\Z$-Hodge structures. Our references are Milne~\cite[Chapter~2]{milne:ShVar} and Deligne~\cite{deligne:shimura}. From now on, let $R=\Z, \Q$ or $\R$.

\begin{definition}
    Let $V$ be a real vector space of finite dimension and let $V_\C:=V\otimes \C$. 
    The complex conjugation of $\C$ acts on $V_\C$ via the factor $\C$, i.e., $\ol{v\otimes z}:=v\otimes \bar z$ for $v\in V$ and $z\in \C$.

\begin{enumerate}
    \item A {\it Hodge decomposition} of $V$ is a decomposition of the $\C$-vector space
    \[ V_\C=\bigoplus_{(p,q)\in \Z^2} V^{p,q}\] 
    into $\C$-subspaces $ V^{p,q}$ such that $\ol {V^{p,q}}=V^{q,p}$ for all $(p,q)$. 
    \item An {\it $\R$-Hodge structure} (or {\it real Hodge structure}) is a finite-dimensional real vector space $V$ together with a Hodge decomposition of $V$. The set $\{(p,q)\in \Z^2: V^{p,q}\neq 0\}$ is called the {\it Hodge type} of $V$. The $\R$-Hodge structure $(V, \{V^{p,q}\})$ is called {\it pure of weight $n$} if $V^{p,q}\neq 0$ only for $(p,q)$ with $p+q=n$. One has a unique decomposition
    \[ V=\bigoplus_n V^n, \quad \text{ where } \quad V^n_\C=\bigoplus_{p+q=n} V^{p,q}, \]
    and each $V^n$ is a pure $\R$-Hodge structure of weight $n$. 
    \item An {\it $R$-Hodge structure} is a finite and free $R$-module $L$ together with an $\R$-Hodge structure on $L_\R:=L\otimes_R \R$ such that
    $L=\bigoplus_{n\in \Z} (L\cap (L_\R)^n)$. 
    It is called \emph{pure of weight $n$} if the $\R$-Hodge structure $L_\R$ is so. Putting $L^n:=L\cap (L_\R)^n$, then $L^n$ is a pure $R$-Hodge structure of weight $n$.

    \item A \emph{morphism} $f:L_1 \to L_2$ of $R$-Hodge structures is an $R$-linear map such that the base-changed map $f_\C:L_{1,\C}\to L_{2,\C}$ preserves the $\Z^2$-graded structures of $L_{1,\C}$ and $L_{2,\C}$. 
\end{enumerate}
\end{definition}

Let $\bbS:=\Res_{\C/\R} \bbG_{m,\C}$ be the Deligne torus. One has $\bbS(\R)=\C^\times$ and $\bbS(\C) = (\C \otimes_{\mathbb{R}} \C)^\times \simeq \C^\times \times \C^\times$ (sending $a\otimes b\mapsto (ab,\bar a b)$) with the inclusion $\bbS(\R)\subset \bbS(\C)$ given by $z\mapsto (z, \bar z)$. Viewing $\bbS$ as an $\R$-torus, the complex conjugation acts on $\bbS(\C) = (\C \otimes_{\mathbb{R}} \C)^{\times}$ (through conjugating the second copy of~$\C$) 
by $\overline{(z_1,z_2)}=(\bar z_2, \bar z_1)$.
For each real Hodge structure $V$, one defines a morphism 
\[ h_\C: \bbS_\C \to \GL(V_\C)\]
of algebraic groups over $\C$ by 
\begin{equation}\label{eq:hodgetype}
   h_\C(z_1,z_2) \cdot v := z_1^{-p} z_2^{-q} v, \quad \forall\, v\in V^{p,q}. 
\end{equation}
The morphism $h_\C$ is defined over $\R$ and descends to a morphism $h:\bbS \to \GL(V)$ of real algebraic groups. 
The category ${\rm Hdg}_\R$ of real Hodge structures is an $\R$-linear tensor category, and the above functor gives an equivalence of $\R$-linear tensor categories between the category ${\rm Hdg}_\R$ and the tensor category ${\rm Rep}_{\R} \bbS$ of real representations of $\bbS$.
We shall also call the representation $(V,h)$ as above a real Hodge structure. We also denote an $R$-Hodge structure by $(L,h)$, where $h:\C^\times \to \GL(L_\R)$ is the corresponding real representation of the real group $\C^\times=\bbS(\R)$. Then a morphism $f:(L_1,h_1)\to (L_2,h_2)$ of $R$-Hodge structures is an $R$-linear map $f:L_1\to L_2$ such that $f_\R \circ h_1(z)=h_2(z)\circ f_\R$ for all $z\in \C^\times$. 

Let $\Z(1)=(2\pi i)\cdot \Z$ denote the Tate twist. It is a $\Z$-rank one $\Z$-Hodge structure of weight~$-2$ with Hodge type $(-1,-1)$. Let $\Z(-1):=\Hom_{\Z}(\Z(1),\Z)=(2\pi i)^{-1}\cdot \Z$ be the dual of $\Z(1)$.
Set $\mathbb Z(0):=\mathbb Z=(2\pi i)^0\mathbb Z$, and, for any $n\in\mathbb Z\setminus\{0\}$, set $\Z(n):=\Z(n/|n|)^{\otimes |n|}=(2\pi i)^n \cdot \Z$ to be the $n$-th power of $\Z(1)$ and set 
$R(n):=\Z(n)\otimes_\Z R=(2\pi i)^n \cdot R$.   
The corresponding character $h:\C^\times \to \GL(\R(n))=\R^\times$ sends $z$ to $(z\bar z)^{n}$.

\def\PHdg{${\rm PHdg}_\Z$\ }
\begin{definition}\label{def:pol}
\begin{enumerate}
    \item A {\it polarisation} on a pure $R$-Hodge structure $(L,h)$ of weight $n$ is an $R$-bilinear pairing
    \[ \psi: L \times L \to R(-n),  \] 
    such that:
    \begin{enumerate}
        \item the induced map 
\[ \psi_\R: L_\R \times_\R L_\R \to \R(-n) \]
        is $\bbS(\R)$-equivariant, that is, 
        \begin{equation}
            \psi_\R(h(z)x,h(z)y)=(z \bar z)^{-n} \psi_{\mathbb{R}}(x, y), \quad \forall\, x,y\in L_\R, \quad z\in \C^\times=\bbS(\R); \text{\ and}  
        \end{equation}
        \item the pairing 
        \begin{equation} \label{eq:varphi}
            \varphi: L_\R \times L_\R \to \R, \quad \varphi(x,y):=(2 \pi i)^n \psi_\R(x,Cy)\in \R, \quad  C=h(i)
        \end{equation}
        is symmetric and positive definite. 
    \end{enumerate}

    \item A {\it polarised $R$-Hodge structure of pure weight $n$}, denoted $(L,\psi, h)$, consists of an $R$-Hodge structure $(L, h)$ of pure weight $n$ together with a polarisation $\psi$ on $(L,h)$. A morphism $f: (L_1,\psi_1,h_1)\to (L_2,\psi_2,h_2)$ of polarised $R$-Hodge structures of pure weight $n$ is a morphism $f:(L_1,h_1) \to (L_2,h_2)$ of $R$-Hodge structures such that 
    \begin{equation}\label{eq:3}
        \psi_2(f(x),f(y))=\psi_1(x,y), \quad  \forall\,x,y\in L_1.
    \end{equation}
    We denote by ${\rm PHdg}_R(n)$ the category of polarised $R$-Hodge structures of pure weight~$n$. 

    \item A {\it polarisation} on an arbitrary $R$-Hodge structure $(L,h)$ is an $R$-bilinear pairing
    \[ \psi: L \times L \to \oplus_{n\in \Z} R(-n),  \] 
    respecting the gradings by weights such that for each $n\in \Z$, its restriction $\psi^n$ to $L^n$ is a polarisation on $(L^n,h|_{L^n})$. That is, the map $\psi^n:L^n\times L^n \to R(-n)$ satisfies Definition~\ref{def:pol}.(1) (a) and (b). Polarised $R$-Hodge structures and morphisms between them are defined in the same way as (2) for pure weights. We denote by ${\rm PHdg}_R$ the category of polarised $R$-Hodge structures. 
    
\end{enumerate}
\end{definition}

Condition (b) implies that $\psi$ is automatically non-degenerate. 
The non-degeneracy of $\psi$ and Condition \eqref{eq:3} imply that every morphism $f$ of polarised $R$-Hodge structures is injective. 
    
A real Hodge structure $h:\C^\times \to \GL(V)$ is said to arise from a complex structure if it can be extended to a morphism $h:\C \to \End(V)$ of $\R$-algebras. This occurs precisely when~$V$ is of Hodge type $\{(-1,0), (0,-1)\}$. By a theorem of Riemann \cite[Theorem 4.7]{deligne:shimura}, there is an equivalence of categories between the category of polarised complex abelian varieties and the category of polarised $\Z$-Hodge structures of Hodge type $\{(-1,0), (0,-1)\}$. In particular, Theorem~\ref{thm:1.5} generalises Theorem~\ref{thm:1.3} in the case where $K=\C$.

\subsection{Uniqueness of orthogonal decompositions}
\label{sec:P.2}\

From now on we consider $R$-Hodge structures with $R=\Z$ and call them integral Hodge structures. 

\begin{definition}

\begin{enumerate}
    \item An \emph{orthogonal decomposition} of an  object $(L,\psi,h)$ in ${\rm PHdg}_\Z$ is $L=\oplus_{\nu=1}^r L_\nu$, where each $L_\nu$ is a $\Z$-Hodge substructure (that is, a $\Z$-sublattice in $L$ such that 
    $L_{\nu,\R}$ is stable under the action of $h$), such that $\psi(L_\nu,L_\mu)=0$ for $\nu\neq \mu$.
    We also write $L=\bot_{\nu=1}^r L_\nu$. 

    \item A nonzero object $(L,\psi,h)$ in ${\rm PHdg}_\Z$ is said to be \emph{indecomposable} if whenever
   $L=L_1\bot L_2$ is an orthogonal decomposition of $L$ in ${\rm PHdg}_\Z$, one has either $L_1=0$ or $L_2=0$. 
\end{enumerate}   
\end{definition}
 




\begin{theorem}\label{thm:5.4}
Let $(L,\psi,h)$ be a polarised pure integral Hodge structure, 
and let 
\begin{equation}
L=\bot_{\nu=1}^r L_\nu     
\end{equation}
be an orthogonal decomposition into indecomposable polarised  integral Hodge substructures. Then the set of sublattices $L_\nu$ is uniquely determined by $L$.
\end{theorem}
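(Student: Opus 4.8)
The plan is to reduce Theorem~\ref{thm:5.4} to Theorem~\ref{thm:UniDecIdem}, in direct analogy with the deduction of Theorem~\ref{thm:1.3} in the previous section. Fix the weight $n$, so that $\psi\colon L\times L\to\Z(-n)$. I would take $R^0:=\End(L_\Q,h)$, the endomorphism algebra of the rational Hodge structure $(L_\Q,h)$, together with its order $R:=\End(L,h)=R^0\cap\End_\Z(L)$; clearing denominators shows $R\otimes_\Z\Q=R^0$, so $R$ is indeed an order. Since $\psi$ is non-degenerate, every $g\in R^0$ admits a $\psi$-adjoint $g^*\in\End_\Q(L_\Q)$ determined by $\psi(gx,y)=\psi(x,g^*y)$; because $\psi$ is a morphism of Hodge structures and $g$ commutes with $h$, so does $g^*$, whence $g^*\in R^0$. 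Using the $(-1)^n$-symmetry of $\psi$ one checks $g^{**}=g$ and $(gh)^*=h^*g^*$, so $*$ is an anti-involution of $R^0$. The goal is then to match orthogonal decompositions of $(L,\psi,h)$ with decompositions of $1\in R$ into orthogonal Hermitian idempotents, and invoke Theorem~\ref{thm:UniDecIdem}.

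The crucial point is to verify the positivity hypothesis $\Tr_{R^0/\Q}(gg^*)>0$ for $g\neq 0$. Here the key observation is that $*$ is \emph{also} the adjoint with respect to the positive-definite symmetric form $\varphi(x,y)=(2\pi i)^n\psi_\R(x,Cy)$ of Definition~\ref{def:pol}, where $C=h(i)$: since both $g$ and $g^*$ commute with $C$, one computes $\varphi(gx,y)=(2\pi i)^n\psi_\R(x,g^*Cy)=(2\pi i)^n\psi_\R(x,Cg^*y)=\varphi(x,g^*y)$. Evaluating in a $\varphi$-orthonormal basis $\{e_k\}$ of $L_\R$, the faithful module trace $T(g):=\Tr_{L_\Q}(g)=\Tr_{L_\R}(g)$ satisfies $T(gg^*)=\sum_k\varphi(g^*e_k,g^*e_k)>0$ for $g\neq 0$. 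In particular $R^0$ is semi-simple, since any nonzero $x$ in its radical would satisfy $T(xx^*)=0$ (as $xx^*$ is then nilpotent), contradicting positivity. Positivity then propagates from $T$ to the regular trace $\Tr_{R^0/\Q}$: both are combinations, with positive coefficients, of the reduced traces $\Trd$ of the simple factors of $R^0$, and the reduced trace form of a positive involution is positive-definite on each factor; this is the same implication used for $\Tr_X$ in the proof of Theorem~\ref{thm:1.3}. I expect this positivity step to be the main point requiring care.

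It remains to set up the bijection. Given an orthogonal decomposition $L=\perp_{\nu=1}^r L_\nu$ into Hodge substructures, the projections $\pi_\nu\colon L\to L_\nu\hookrightarrow L$ preserve $L$ and commute with $h$ (each $L_{\nu,\R}$ being $h$-stable), so $\pi_\nu\in R$; they are idempotents with $\sum_\nu\pi_\nu=\id$, and the orthogonality $\psi(L_\nu,L_\mu)=0$ gives $\psi(\pi_\nu x,y)=\psi(\pi_\nu x,\pi_\nu y)=\psi(x,\pi_\nu y)$, so $\pi_\nu^*=\pi_\nu$, while $\pi_\nu\pi_\mu=0$ for $\nu\neq\mu$ means $\pi_\nu\perp\pi_\mu$. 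Conversely, a decomposition $1=\sum_\nu i_\nu$ into orthogonal elements of $I^*(R)$ yields Hodge substructures $L_\nu:=i_\nu L$ with $L=\oplus_\nu L_\nu$ and $\psi(i_\nu x,i_\mu y)=\psi(x,i_\nu i_\mu y)=0$ for $\nu\neq\mu$; these two constructions are mutually inverse, and the summand $L_\nu$ is indecomposable exactly when $i_\nu$ is. Theorem~\ref{thm:UniDecIdem} then yields uniqueness of $\{i_\nu\}_\nu$, hence of $\{L_\nu\}_\nu$, which proves Theorem~\ref{thm:5.4}.
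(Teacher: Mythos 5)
Your proposal is correct and follows essentially the same route as the paper: pass to $R^0=\End(L_\Q,h)$ with its order $R=\End(L,h)$, observe that the $\psi$-adjoint coincides with the $\varphi$-adjoint (since endomorphisms commute with $C=h(i)$) so that $*$ is a positive involution, set up the bijection between orthogonal decompositions of $(L,\psi,h)$ and orthogonal decompositions of $1$ into Hermitian idempotents of $R$, and invoke Theorem~\ref{thm:UniDecIdem}. Your only addition is to spell out the propagation of positivity from the module trace on $L_\Q$ to $\Tr_{R^0/\Q}$, a step the paper leaves implicit (and justifies elsewhere via the equivalence with positivity of the reduced trace form on a semi-simple algebra).
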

\begin{proof}
    Let $R:=\End(L, h)$  
     be the endomorphism ring of $(L,h)$ and $R^0:=\End(L_\Q, h)$ be the endomorphism algebra of $(L_\Q,h)$.
     Since $(L_\Q, h)$ is polarised, 
     $R^0$ is a semi-simple finite-dimensional $\Q$-algebra. We denote the Rosati involution (the adjoint) on $R^0$ with respect to $\psi$ by $*$; i.e.,  $\psi_{\mathbb Q}(ax,y)=\psi_{\mathbb Q}(x,a^*y)$. Since elements of $R^0$ commute with $C = h(i)$, the involution $*$ is also the adjoint with respect to the positive-definite form $\varphi$ and hence it is a positive involution. 
    
    As in the proof of Theorem~\ref{thm:1.3} by Theorem~\ref{thm:UniDecIdem}, we shall show that every orthogonal decomposition $1=i_1+i_2+\dots+i_r$ with nonzero $i_\nu\in I^*(R)$ gives rise to a nontrivial orthogonal decomposition of polarised $\Z$-Hodge structures $L=\bot_{\nu=1}^r L_\nu$ and vice versa.
    Let $L=\bot_{\nu=1}^r L_\nu$ be an orthogonal decomposition in ${\rm PHdg}_\Z$. 
    For each $1\le \nu\le r$, set 
    \[ i_\nu:L\xrightarrow{p_{L_\nu}} L_\nu \xrightarrow{i_{L_\nu}} L, \] 
    where $p_{L_\nu}$ is the orthogonal projection of $L$ onto $L_\nu$ and $i_{L_\nu}:L_\nu\hookrightarrow L$ is the inclusion. Then $1=i_1+i_2+\dots+i_r$ and $i_\nu^2=i_\nu$ for all $\nu$. For any $x_\mu\in L_\mu$ and $y\in L$, one checks that 
    \[ \psi(y,i_\nu^*(x_\mu))=\begin{cases}
        0 & \text{if $\mu\neq \nu$}; \\
        \psi(y, x_\mu) & \text{if $\mu= \nu$}.
    \end{cases}
    \]
    Then $i_\nu^*=i_\nu$ and $i_\nu\bot i_\mu$ for $\nu\neq \mu$. Thus, we have an orthogonal decomposition $1=i_1+i_2+\dots+i_r$ with $i_\nu\in I^*(R)$.    

    Conversely, if an orthogonal decomposition $1=i_1+i_2+\dots+i_r$ with $i_\nu\in I^*(R)$ is given, we set $L_\nu:=i_\nu L$. Then $L_\nu\bot L_\mu$ for $\nu\neq \mu$ and $L=\sum L_\nu$. Since \[ h(z)L_{\nu,\R}=h(z) i_\nu L_\R= i_\nu h(z) L_\R=i_\nu L_\R=L_{\nu, \R}, \quad z\in \C^\times, \]
    we see that $L=\bot_{\nu} L_\nu$ is an orthogonal decomposition in ${\rm PHdg}_\Z$. 
    Moreover, one checks that these two constructions are inverses of each other. This shows the desired one-to-one correspondence and proves the theorem. 
\end{proof}

\begin{proof}[Proof of Theorem~\ref{thm:1.5}]
We have the decomposition of 
$\Z$-Hodge structures $L=\oplus_{n\in \Z} L^n$ by weights. For different weights $n$ and $m$, we have $\psi(L^n,L^m)=0$. This reduces the problem to the case of pure weight, which is Theorem~\ref{thm:5.4}. 
\end{proof}

\providecommand{\bysame}{\leavevmode\hbox to3em{\hrulefill}\thinspace}
\providecommand{\MR}{\relax\ifhmode\unskip\space\fi MR }
\providecommand{\MRhref}[2]{%
  \href{http://www.ams.org/mathscinet-getitem?mr=#1}{#2}
}
\providecommand{\href}[2]{#2}

\end{document}